\newcommand{\MB}{\boldsymbol{M}}       
\newcommand{\NB}{\boldsymbol{N}}       
\newcommand{\AB}{\boldsymbol{A}}       
\newcommand{\BB}{\boldsymbol{B}}       
\newcommand{\CB}{\boldsymbol{C}}
\newcommand{\DB}{\boldsymbol{D}}
\newcommand{\KB}{\boldsymbol{K}}
\newcommand{\FB}{\boldsymbol{F}}
\newcommand{\LB}{\boldsymbol{L}}
\newcommand{\R}{\mathbb{R}}       
\newcommand{\N}{\mathbb{N}}       
\newcommand{\U}{\mathbb{U}}
\newcommand{\KC}{\mathcal{K}}
\newcommand{\FC}{\mathcal{F}}
\newcommand{\LC}{\mathcal{L}}
\newcommand{\Iso}{\mathrm{Iso}}
\newcommand{\Aut}{\mathrm{Aut}}
\newcommand{\e}{{\varepsilon}}
\newcommand{\stm}{\setminus}
\newcommand{\res}{\mathord{\upharpoonright}}
\newcommand{\Fraisse}{Fra\"iss\'e}
\newtheorem{theorem}{Theorem}[section]
\newtheorem{coro}[theorem]{Corollary}
\newtheorem{propo}[theorem]{Proposition}
\newtheorem*{lemma*}{Lemma}
\newtheorem{lm}[theorem]{Lemma}
\theoremstyle{definition}
\newtheorem{definition}[theorem]{Definition}
\newtheorem*{definition*}{Definition}
\newtheorem*{Ack}{Acknowledgements}
\newtheorem{example}[theorem]{Example}
\theoremstyle{remark}
\newtheorem{rem}[theorem]{\bf Remark}
\newtheorem{question}{Question}
\newcommand{\brem}{\begin{rem}}
\newcommand{\erem}{\end{rem}}
\newenvironment{theorem*}[1][]{\par\medskip\noindent\textbf{Theorem} #1\textbf{.}\itshape}{\medskip}
\newenvironment{theorem**}[1][]{\par\medskip\noindent\textbf{Definition} #1\textbf{.}}{\medskip}
\newenvironment{theorem***}[1][]{\par\medskip\noindent\textbf{Theorem} #1\textbf{.}\itshape}{\medskip}
\begin{document}
\title{Extendability of automorphisms of generic substructures}
\author{Aristotelis Panagiotopoulos}
\address{Department of Mathematics, 1409 W. Green st., University of Illinois, Urbana, IL 61801, USA}
\email{panagio2@illinois.edu}
\thanks{Research partially supported by University of Illinois Research Board Grant and NSF grant DMS-1001623}
\keywords{\Fraisse{} class, \Fraisse{} limit, ultahomogeneous structure, Urysohn universal space, Urysohn sphere, isometry, isomorphism}
\subjclass[2010]{03C15, 03C50, 03C52, 03E15, 51F99}

\maketitle
\begin{abstract}
We show that if $g$ is a generic (in the sense of Baire category) isometry of a generic subspace of the Urysohn metric space $\U$, then $g$ does not extend to a full isometry of $\U$. The same holds for the Urysohn sphere $\mathbb{S}$. Let $\boldsymbol{M}$ be a \Fraisse{} $\LC$ structure, where $\LC$ is a relational countable language and $\MB$ has no algebraicity. We provide necessary and sufficient conditions for the following to hold: 
``For a generic substructure $\boldsymbol{A}$ of $\boldsymbol{M}$, every automorphism $f\in\mathrm{Aut}(\boldsymbol{A})$ extends to a full automorphism $\tilde{f}\in\mathrm{Aut}(\boldsymbol{M})$.'' From our analysis, a dichotomy arises and some structural results are derived that, in particular, apply to $\omega$-stable \Fraisse{} structures without algebraicity. 
\end{abstract}

\tableofcontents

\pagebreak

\section*{Introduction}

The (separable) \emph{Urysohn metric space} $(\mathbb{U},\rho)$ was introduced in \cite{Urysohn} and it is the unique, up to isometry, Polish metric  space that satisfies the following properties:
\begin{itemize}
\item[$\cdot$] (ultrahomogeneity) for every two finite isometric subspaces  $A,B\subset\mathbb{U}$ and for every isometry $f:A\to B$, $f$ extends to a full isometry $\tilde{f}$ of $\mathbb{U}$;
\item[$\cdot$] (universality) every Polish metric space is isometric to a subspace of $\mathbb{U}$. 
\end{itemize}    
Huhunai\v{s}vili showed in \cite{Huhu} that $\U$ satisfies a strengthening of the ultrahomogeneity property, attained by replacing the adjective ``finite" in $A$ and $B$ above with ``compact". 

There are spaces that enjoy a much stronger version of homogeneity. Consider for example the Euclidean metric space $\mathbb{R}^m$. Then, it is true that for every two, possibly infinite, metric subspaces $A,B$ and every isometry $f:A \to B$, there is an isometry $\tilde{f}$ of the whole space $\R^m$ that extends $f$. In the case of $\U$, it was shown by Melleray in \cite{MellerayInitial} that Huhunai\v{s}vili's result cannot be extended further, i.e., if $X$ is non-compact space then there are isometric copies $A,B$ of $X$ in $\U$ and an isometry $f:A\to B$ that does not extend to an isometry $\tilde{f}$  of $\U$. It is worth noting here that on the other hand, for any separable metric space $X$, adopting Uspenskij's use of  Kat\v{e}tov's tower construction \cite{Uspenskij,Katetov} we can find copies $A,B$ of $X$ in $\U$ so that any isometry $f:A\to B$ extends to a global isometry. 

The approach that we consider here is of a slightly different sort. For every Polish metric space $(X,d)$ there is canonical Polish topology for the hyperspace $\FC(X)$ of all closed subsets of $X$, namely, the Wijsman topology \cite{Wijsman,Beer}. We say that for a \emph{generic subspace} of $X$ a certain property holds if the set of all closed subsets of $X$ that have this property is a comeager subset of $\FC(X)$ in the Wijsman topology. The question which motivates our investigation is whether for a generic subspace $F$ of  $\U$, every self-isometry of $F$ extends to an isometry of $\U$.  It turns out as a consequence of Lemma \ref{TheoremUrysohnAbsorbsPoints} that for a generic subspace $F$ of $\U$ the space $F$ is isometric to $\U$ itself. Since the generic subspace is of one isometry type, it therefore makes sense to ask whether for a \emph{generic pair} $F_1, F_2$ of subspaces of $\U$ every isometry $f:F_1\to F_2$ extends to a global isometry. Here, we identify pairs of closed subsets of $\U$ with points in $\FC(\U)\times\FC(\U)$. Keeping in mind that $\U$ is just an instance of the general problem that we are going to deal with, consider the following definitions.

For every two isometric Polish metric spaces $X,Y,$ we write $\Iso(X)$ for the space of all isometries of $X$, and $\Iso(X,Y)$ for the space of all isometries from $X$ onto $Y$. The spaces $\Iso(X)$ and $\Iso(X,Y)$ are Polish, equipped with the pointwise convergence topology (see, for example, \cite{bible}*{Section 9B}). If $A,B$ are isometric subsets of $X$ we write $\mathcal{E}(A)$ to denote the set of all self-isometries of $A$ that extend to a global isometry of $X$ and similarly by $\mathcal{E}(A,B)$ we denote the set of all isometries from $A$ to $B$ that extend to a global isometry of $X$.
\begin{definition*}
Let $A$ be a subspace of $\U$. We say that $A$ is a \emph{global subspace} if $\mathcal{E}(A)=\Iso(A)$, a \emph{non-global subspace} if $\mathcal{E}(A)\subsetneq\Iso(A)$, or a \emph{strongly non-global subspace} of $\U$ if $\mathcal{E}(A)$ is a meager subset of $\Iso(A)$.

Similarly, we say that a pair $A,B$ of isometric subspaces of $\U$ is a \emph{global pair} if $\mathcal{E}(A,B)=\Iso(A,B)$, a \emph{non-global pair} if $\mathcal{E}(A,B)\subsetneq\Iso(A,B)$, or a \emph{strongly non-global pair} in $\U$ if $\mathcal{E}(A,B)$ is a meager subset of $\Iso(A,B)$. 
\end{definition*}
In Chapter \ref{ChapterUrysohn}, we give a ``strongly negative" answer to both of our initial questions. The same results also follow for the Urysohn sphere $\mathbb{S}$ by appropriating our methods in the bounded metric context.
\begin{theorem***}[\textbf{\ref{UrysohnMainTheorem}}]
Let $\U$ be the Urysohn space. Then, the generic subspace $F\in\mathcal{F}(\U)$ as well as the generic pair $A,B,$ of subspaces of $\mathcal{F}(\U)$ are strongly non global. 
\end{theorem***}

Before we develop the theory for the Urysohn space, we undertake the task of answering the same questions in the countable setting. Where instead of a metric space, we work with ultrahomogeneous countable $\LC$-structures $\MB$ of some relational language $\LC$. A structure $\MB$ is called \emph{ultrahomogeneous} if every isomorphism between finite substructures of $\MB$ can be extended to a full automorphism of $\MB$. The rationals with their natural ordering $(\mathbb{Q},\leq)$ and the random graph $(\mathbb{G},R)$ are two classical examples of countable ultrahomogeneous structures. Working in this new context we can ask the same questions if we first make the natural changes: we replace the word ``subspace" with the word ``substructure," the word ``isometry" with the word ``isomorphism," and we identify the space of all substructures of $\MB$ with the Cantor space $2^M$. 

 If $\MB$ is an $\LC$-structure, $\mathrm{Age}(\MB)$ denotes the class of all  finite $\LC$-structures that can be embedded in $\MB$. Countable ultrahomogeneous structures are also called \emph{\Fraisse{} structures} because each such structure $\MB$ can be attained as a limit (the so called \emph{Fra\"{i}ss\'e limit}) over $\mathrm{Age}(\MB)$. The class $\mathrm{Age}(\MB)$ is called a \emph{\Fraisse{} class} if $\MB$ is ultrahomogeneous. This approach, introduced by \Fraisse{} in \cite{Fraisse}, allowed the systematic study of infinite ultrahomogeneous structures $\MB$ through the study of the combinatorial properties of the finite objects lying in  $\mathrm{Age}(\MB)$.

 In this paper, we will limit our study to structures which have no algebraicity (see \cite{Hodges}, \cite{oligomorphic}, or Chapter \ref{ChapterSAP} for a definition). One of the known consequences that we also derived here from Lemma \ref{pair}, is that if $\MB$ has no algebraicity then for a generic substructure $\AB$ of $\MB$ the structure $\AB$ is isomorphic to $\MB$. In Chapter \ref{ChapterSAP} we will see that for a \Fraisse{} structure $\MB$ without algebraicity the generic substructure of $\MB$, as well as the generic pair of substructures of $\MB$, is either global or strongly non-global. Moreover, we will reflect the dividing line of the aforementioned dichotomy to the following, central in this paper, property of \Fraisse{} classes.

\begin{theorem**}[\textbf{\ref{splits}}]
Let $\mathcal{K}$ be a \Fraisse{} class and let $\boldsymbol{C}\in\mathcal{K}$. We say that $\boldsymbol{C}$ \emph{splits} $\mathcal{K}$ if for every $\boldsymbol{D}\in\KC$ and for every embedding $i:\CB\to\DB$ , there are structures $\boldsymbol{D}_1,\boldsymbol{D}_2\in\KC$, embeddings $j_1:\DB\to\DB_1$ and $j_2:\DB\to\DB_2$  and a bijection $f:D_1\to D_2,$ such that:
\begin{itemize}
\item[$\cdot$] $f\circ{}j_1=j_2$;
\item[$\cdot$] $f\res_{D_1\setminus C}$ is an isomorphism between $\langle D_1\setminus C\rangle_{\boldsymbol{D}_1}$ and $\langle D_2\setminus C\rangle_{\boldsymbol{D}_2}$;
\item[$\cdot$] $f$ is not an isomorphism between $\DB_1$ and $\DB_2$.
\end{itemize}  
\end{theorem**}
We say that $\mathcal{K}$ \emph{splits} if there is a $\boldsymbol{C}\in\mathcal{K}$ that splits $\mathcal{K}$. In the language of graphs, a typical example of a \Fraisse{} class $\KC$ that splits is the age of the random graph and a typical example of a \Fraisse{} class that does not split is the age of the countable complete graph. The main result of Chapter \ref{ChapterSAP} will be the following theorem.
 
\begin{theorem*}[\textbf{\ref{theorem}}]
Let $\boldsymbol{M}$ be a \Fraisse{} structure that has no algebraicity and let $\mathcal{K}$ be the corresponding \Fraisse{} class. 
\begin{enumerate}
\item If $\mathcal{K}$ splits then the generic substructure $\AB$ of $\MB$ is a strongly non-global substructure and the generic pair $\AB,\BB$ in $\MB$ is a strongly non-global pair.
\item If $\mathcal{K}$ does not split then  the generic substructure $\AB$ of $\MB$  is a global substructure and the generic pair $\AB,\BB$ in $\MB$  is a global pair.
\end{enumerate} 
\end{theorem*} 
Structures $\MB$ with corresponding age $\KC$ that does not split seem to be simpler than the ones having age that splits. In Chapter \ref{ChapterStructuralConsequences}, we present some structural consequences for the structures $\MB$ that have age which does not split. Theorem \ref{theorem omega stable--> does not split } states that $\omega$-stable \Fraisse{} limits with no algebraicity have ages that do not split. We also provide an example showing that the converse is not true. Theorem \ref{theorem_Wreath_Product} is a structural result regarding automorphism groups of \Fraisse{} limits which have no algebraicity and an age that does not split.      

The proofs of the main theorems of Chapters \ref{ChapterSAP} and \ref{ChapterUrysohn} use infinite games. In Chapter \ref{Chapter Game}, we define the Banach Mazur game and we state the main result regarding this game that we are going to use later in the paper. A short note on the Wijsman hyperspace topology is  given in Chapter \ref{ChapterWijsman}. 

 \begin{Ack}
 I want to thank S\l{}awek Solecki for bringing to my attention the main question undertaken in this paper as well as for his help and guidance throughout the research. I would also like to thank the anonymous referee for many helpful comments and suggestions which gave to this paper its final shape.
 \end{Ack}

\section{The Banach Mazur game $G^{**}(E,X)$}\label{Chapter Game}
Let $X$ be a topological space and let $A$ be a subset of $X$. The set $A$ is called meager if it is a countable union of nowhere dense in $X$ sets. The collection of all meager sets of $X$ forms a $\sigma$-ideal. Therefore, meager sets can be thought of as topologically small sets and their complements, the so called comeager sets, can be thought of as topologically large sets. Equivalently, we can directly define comeager sets as exactly those subsets of $X$ which contain a dense in $X$, $G_\delta$ subset of $X$.  A useful  technique used to prove that a subset $A$ of a topological space $X$ is comeager involves an infinite game known as Banach Mazur game $G^{**}(A,X)$. Here, we are going to review in short the Banach Mazur game. For a more detailed exposure on the notion of meager and comeager sets, Banach Mazur games,  as well as the proof of the main theorem of this chapter see \cite{bible}.

Let $X$ be a topological space and let $A$ be a subset of $X$. The \emph{Banach Mazur game} $G^{**}(A,X)$ is a game played with 2 players, player I and player II. Player I starts by choosing an open subset $U_0$ of $X$ and then player II replies with an open subset $V_0$ of $U_0$. Then, player I plays further a new open set $U_1$ with $U_1\subset V_0$ and so on. The game continues this way with the two players alternating turns and together defining a decreasing sequence of open sets. A run of the game looks as follows:
\[U_0\supset V_0 \supset  U_1 \supset V_1\supset\ldots\supset U_m\supset V_m\supset\ldots,\]
and player II wins this run of the game if and only if $\bigcap_n V_n(=\bigcap U_n)\subset A$. A winning strategy for player II is roughly a preestablished rule that tells player II which open set $V_n$ to reply given an initial segment $(U_0,V_0,\ldots,U_n)$ of any possible run of the game and that moreover, this rule leads always to victory for Player II. The following theorem is the main result that we are going to use regarding the $G^{**}(A,X)$ game.

\begin{theorem}[Banach-Mazur, Oxtoby]\label{banach-mazur}
Let $X$ be a nonempty topological space. Then $A$ is comeager if and only if player II has a winning strategy in $G^{**}(A,X)$.
\end{theorem}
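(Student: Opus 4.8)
The plan is to prove the two implications separately, disposing of the easy direction quickly and concentrating the real work on recovering comeagerness from a winning strategy. For the ``only if'' direction, suppose $A$ is comeager, so that $A$ contains a dense $G_\delta$ set $\bigcap_n W_n$ with each $W_n$ dense open. I would let player II follow the strategy that, in round $n$, responds to player I's move $U_n$ with $V_n := U_n \cap W_n$. Since $W_n$ is dense and $U_n$ is nonempty open, $V_n$ is a legal (nonempty open) reply contained in $U_n$; and along any resulting run $\bigcap_n V_n \subseteq \bigcap_n W_n \subseteq A$, so the strategy is winning.

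For the ``if'' direction, which is the substantive half, suppose $\sigma$ is a winning strategy for player II, and aim to produce a dense $G_\delta$ set contained in $A$. I would unfold $\sigma$ into a tree of finite positions, where each node is a run consistent with $\sigma$ ending in a move of II, and I attach to each node $s$ the open set $V_s$ that II plays there; the root is the empty position with $V$ equal to $X$. The tree is built level by level: given a node with position $p$ and associated open set $V$, I consider all replies $\sigma(p^{\frown}U)$ that $\sigma$ dictates against legal player-I moves $U \subseteq V$, and I invoke Zorn's lemma to extract a family of such replies that is maximal among pairwise disjoint ones. The children of $s$ are exactly the positions realizing the members of this maximal family (so the branching is indexed by that family, not necessarily countable).

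The crux, and the step I expect to be the main obstacle, is showing that the union of each maximal family is dense in its parent $V$: if some nonempty open $W \subseteq V$ met no member of the family, then player I could legally play $U = W$, and II's reply $\sigma(p^{\frown}W) \subseteq W$ would be a further open set disjoint from the entire family, contradicting maximality. Granting this density, an induction shows that for each $n$ the union $\bigcup_{|s|=n} V_s$ is dense open in $X$, so $G := \bigcap_n \bigl(\bigcup_{|s|=n} V_s\bigr)$ is a dense $G_\delta$. Finally, any $x \in G$ lies, by the disjointness of siblings, in exactly one node-set at each level, and these nodes form a single branch; the positions along that branch assemble into one infinite run of the game consistent with $\sigma$, which player II therefore wins. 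Since $x$ belongs to every $V_s$ along the branch, it lies in the intersection of II's moves in that run, which is contained in $A$. Hence $G \subseteq A$ and $A$ is comeager, completing the proof.
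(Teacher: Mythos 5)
Your proof is correct: the easy direction (replying with $V_n = U_n \cap W_n$) and the hard direction (unfolding the strategy $\sigma$ into a tree of positions, taking maximal pairwise-disjoint families of $\sigma$-replies via Zorn's lemma, proving density of their unions by the maximality argument, and tracing a point of the resulting $G_\delta$ set along a unique branch to a $\sigma$-consistent run) together constitute exactly the standard proof of this theorem. The paper itself gives no proof of this statement --- it defers to Kechris's \emph{Classical Descriptive Set Theory}, where this is Theorem 8.33 --- and your argument is essentially the one found in that reference, so the two approaches coincide.
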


\section{Countable \Fraisse{} structures without algebraicity}\label{ChapterSAP}

The main result of this section is Theorem \ref{theorem}. In what follows,  $\mathcal{L}$ will always be a countable, relational language and  $\boldsymbol{M}$ will always  be a countable $\mathcal{L}$-structure. We write $\mathrm{Age}(\boldsymbol{M})$ for the class of all finite $\mathcal{L}$-structures that can be embedded in $\boldsymbol{M}$. We will use lightface letters for the subsets of the domain of $\boldsymbol{M}$ and boldface letters for the induced substructures. For example, $M$ denotes the domain of $\boldsymbol{M}$ and for every $A\subset M$, we write $\boldsymbol{A}=\langle A \rangle_{\MB}$ for the substructure of $\boldsymbol{M}$ generated by $A$. We will also use the notation $\AB^c$ for the structure $\langle A^c\rangle_{\MB}$.  Notice that due to the fact that $\LC$ is relational, we have a bijective correspondence between subsets of $M$ and substructures of $\MB$.   Without any loss of generality we assume from now on that the domain of $\MB$ is the set of natural numbers $\N$. We now have a natural bijective correspondence between substructures $\AB$ of $\MB$ and points in the Cantor space $\mathcal{C}=2^\mathbb{N}$, given by the characteristic function $\chi_A$ of $A$.

The Polish group $S_\infty$ is the group of all bijections of the domain of $\MB$ endowed with the pointwise convergence topology.  We denote with $\mathrm{Aut}(\boldsymbol{M}),$ the group of all automorphisms of the structure $\boldsymbol{M}$. The group $\Aut(\MB)$ is a closed subgroup of $S_\infty$ and therefore a Polish group inheriting the topology from $S_\infty$ . If $\AB,\BB$ are substructures of $\MB$, we write $\Iso(\AB,\BB)$  to denote the space of all isomorphisms from $\AB$ to $\BB$. Again,  endowed with the pointwise convergence topology, $\Iso(\AB,\BB)$ is a Polish space.

An $\mathcal{L}$-structure $\boldsymbol{M}$ is called \emph{ultrahomogeneous} if every isomorphism between finite substructures $\boldsymbol{A},\boldsymbol{B}$ of $\boldsymbol{M}$ extends to a full automorphism of $\boldsymbol{M}$. Countable ultrahomogeneous structures are also known as \emph{\Fraisse{} structures} or \emph{\Fraisse{} limits}. We will further assume here that \Fraisse{} structures are always of non-finite cardinality. We will review some basic facts regarding \Fraisse{} structures. For a more detailed exposition someone may want to consult \cite{Hodges}. If $\boldsymbol{M}$ is a \Fraisse{} structure, and $\mathcal{K}=\mathrm{Age}(\boldsymbol{M})$, then $\mathcal{K}$ has the following properties:
\begin{itemize}
\item[(i)] Hereditary Property(HP): if $\boldsymbol{A}\in\mathcal{K},$ and $\boldsymbol{B}$ is a substructure of $\boldsymbol{A}$, then $\boldsymbol{B}\in\mathcal{K}$; 
\item[(ii)] Joint Embedding Property(JEP): if $\boldsymbol{A},\boldsymbol{B}\in\mathcal{K}$, there is $\boldsymbol{C}$ in $\mathcal{K}$ such that both $\boldsymbol{A}$ and $\boldsymbol{B}$ embed in $\boldsymbol{C}$;
\item[(iii)] Amalgamation Property (AP): if $\boldsymbol{A}, \boldsymbol{B}, \boldsymbol{C}$ in $\mathcal{K}$ and $f:\boldsymbol{A}\to \boldsymbol{B}$, $g:\boldsymbol{A}\to \boldsymbol{C}$ embeddings, there is $\boldsymbol{D}\in\mathcal{K}$ and embeddings $i:\boldsymbol{B}\to \boldsymbol{D}$, $j:\boldsymbol{C}\to \boldsymbol{D}$, such that $if=jg$;  
\item[(iv)] every subclass of pairwise non-isomorphic structures of $\mathcal{K}$ is at most countable, and
\item[(v)] $\mathcal{K}$ contains structures of arbitrary large, finite size. 
\end{itemize}  
If $\mathcal{K}$ is a class of finite $\mathcal{L}$-structures and has the properties (i)-(v), we say that $\mathcal{K}$ is a \emph{\Fraisse{} class}. \Fraisse{}'s theorem \cite{Fraisse} establishes the converse direction: if $\mathcal{K}$ is a \Fraisse{} class, then there is an $\mathcal{L}$-structure $\boldsymbol{M}=\boldsymbol{M}(\mathcal{K})$, unique up to isomorphism, such that $\boldsymbol{M}$ is countably infinite, ultrahomogeneous  and $\mathcal{K}=\mathrm{Age}(\boldsymbol{M})$. 

We say that an $\LC$-structure $\MB$ has no \emph{algebraicity} if the pointwise stabilizer in $\Aut(\MB)$ of an arbitrary tuple of $\MB$ has no finite orbits in its natural action on $\MB$. Here we are going to work only with \Fraisse{} structures $\MB$ that have no algebraicity. A \Fraisse{} structure $\MB$ has no algebraicity if and only if the associated \Fraisse{} class $\mathcal{K}$ satisfies the strong amalgamation property, defined as follows: 
\begin{itemize}
\item[(SAP)] we say that $\mathcal{K}$ has the \emph{strong amalgamation property} if for every
$\boldsymbol{A}, \boldsymbol{B}, \boldsymbol{C}\in\mathcal{K}$ and $f:\boldsymbol{A}\to \boldsymbol{B}$, $g:\boldsymbol{A}\to \boldsymbol{C}$ embeddings, there is $\boldsymbol{D}\in\mathcal{K}$ and embeddings $i:\boldsymbol{B}\to \boldsymbol{D}$, $j:\boldsymbol{C}\to \boldsymbol{D}$, such that $if=jg$ and
\[i(B)\cap j(C)=if(A)=jg(A).\]
\end{itemize}
For the interested reader, a proof of this fact can be found in \cite{oligomorphic}.
\begin{example} \label{example}
The list of countable ultrahomogeneous structures with no algebraicity includes the following examples.
\begin{itemize}
\item[$\cdot$] $\boldsymbol{M}_1=(\N)$, the empty-language, countable structure.
\item[$\cdot$] $\boldsymbol{M}_2=(\sqcup_{i\in\N}G_i,R)$, the disjoint union of countably many countable complete graphs ($\MB_2 \models R(a,b)$ if and only if $a, b\in G_i$ for some $i$).
\item[$\cdot$] $\boldsymbol{M}_3=(\mathbb{G},R)$, the random graph. 
\item[$\cdot$] $\boldsymbol{M}_4=(\mathbb{Q},\leq)$, the countable dense linear order without endpoints.
\item[$\cdot$] $\boldsymbol{M}_5=(Q\mathbb{U},\{d_i\}_{i\in\mathbb{Q}^+})$, the rational Urysohn metric space. 
\end{itemize} 
\end{example}
Returning to the main question that concerns us here, notice that if we pick $\MB$ to be any structure among $\boldsymbol{M}_2,\boldsymbol{M}_3,\boldsymbol{M}_4,\MB_5$, and $\NB$ some infinite substructure of $\MB$, there are always ways of embedding $\boldsymbol{N}$ in $\boldsymbol{M}$ so that every  automorphism of $\boldsymbol{N}$ extends to a global automorphism of 
$\boldsymbol{M}$, and ways of embedding $\boldsymbol{N}$ in $\boldsymbol{M}$, so that not every automorphism of $\boldsymbol{N}$ extends to a global automorphism of $\boldsymbol{M}$. Consider for example the random graph $\MB_3=(\mathbb{G},R)$ and take $\boldsymbol{N}$ to be the  structure that remains if we remove from $\mathbb{G}$  one point $x$. Using a back and forth system we can create an automorphism $f$ of $\boldsymbol{N}$ that sends all points connected to $x$, to the points not connected to $x$ and vice versa. Of course, $f$ cannot be extended to an automorphism of $\boldsymbol{M}_3$. On the other hand, using the Kat\v{e}tov tower construction for graphs, as done in \cite{Tower}, we can embed any countable graph $\NB$ in $\MB_3$ in such a way that every automorphism of $\NB$ extends to an automorphism of $\MB_3$. 
\begin{definition}
Let $\AB,\BB$ be two isomorphic substructures of $\MB$. We write $\mathcal{E}(\AB)$ to denote the set of all self-isomorphisms of $\AB$ that extend to a global isomorphism of $\MB$ and by $\mathcal{E}(\AB,\BB)$ we denote the set of all isomorphisms from $\AB$ to $\BB$ that extend to a global isomorphism of $\MB$.

We say that $\AB$ is a \emph{global substructure} if $\mathcal{E}(\AB)=\Iso(\AB)$, a \emph{non-global substructure} if $\mathcal{E}(\AB)\subsetneq\Iso(\AB)$, or a \emph{strongly non-global substructure} if $\mathcal{E}(\AB)$ is a meager subset of $\Iso(\AB)$. Similarly, we say that the pair $\AB,\BB$ is a \emph{global pair} if $\mathcal{E}(\AB,\BB)=\Iso(\AB,\BB)$, a \emph{non-global pair} if $\mathcal{E}(\AB,\BB)\subsetneq\Iso(\AB,\BB)$, or a \emph{strongly non-global pair}  if $\mathcal{E}(\AB,\BB)$ is a meager subset of $\Iso(\AB,\BB)$. 
\end{definition}

 Recall now that we have identified with $2^\N$ the space of all substructures of $\MB$. We say that for a \emph{generic substructure} $\AB$ of $\MB$ a certain property holds if and only if the set  of all substructures $\AB$ of $\MB$ that have this property is a comeager subset of $2^\N$. Similarly, we say that for a \emph{generic pair of substructures of} $\MB$, a certain property holds if and only if the set of pairs $\AB,\BB$ of substructures of $\MB$ that have this property form a comeager subset of $2^\N\times2^\N$. In what follows, we are going to see that for a \Fraisse{} limit without algebraicity, the generic substructure of $\MB$, as well as the generic pair of substructures of $\MB$,  is either global or strongly non global.  We are also going to reflect this dichotomy to the satisfiability or non-satisfiability of a certain property of the \Fraisse{} class $\KC$ corresponding to $\MB$. We begin by giving an example to offer some intuition regarding the forthcoming Definition \ref{splits}.
\begin{example}
Let $\KC_3$ be the \Fraisse{} class of all finite graphs. Let $\DB$ be any finite graph and let $c\in D$. Let also $D_1=D\cup\{w\}$, where $w\not\in D$, and consider any graph $\DB_1\in\KC_3$ with domain $D_1$ such that $\DB_1\res D= \DB$. Notice that whatever $\DB_1$ is chosen to be, we can find another graph $\DB_2\in \KC_3$ on the same domain $D_2=D_1$, such that:
\begin{itemize}
\item[$\cdot$] $\DB_2\res D=\DB$;
\item[$\cdot$] $\DB_2\res \big(D_2\stm\{c\}\big)=\DB_1\res \big(D_1\stm\{c\}\big)$, and
\item[$\cdot$] $\DB_2\models R(c,w)$ if and only if $\DB_1\models\neg R(c,w)$.
\end{itemize} 
This basically says that $\DB_1$ and $\DB_2$ are not isomorphic, but the only way to witness this fact is by checking the relations between $c$ and $w$. Notice moreover, that the same is not true for the \Fraisse{} class $\KC_2$ corresponding to $\MB_2$ from Example \ref{example} above: if $\DB$ is any graph from $\KC_2$ with $c,u\in D$ connected and $\DB '\in\KC_2$ is extending $\DB$, then relationship between $c$ and any new point $w$ of $\DB '$ is uniquely determined by the relation between $u$ and $w$.
\end{example}
\begin{definition}\label{splits}
Let $\mathcal{K}$ be a Fra\"iss\'e class, and let $\boldsymbol{C}\in\mathcal{K}$. We say that $\boldsymbol{C}$ \emph{splits} $\mathcal{K}$ if for every $\boldsymbol{D}\in\KC$ and for every embedding $i:\CB\to\DB$ , there are structures $\boldsymbol{D}_1,\boldsymbol{D}_2\in\KC$, embeddings $j_1:\DB\to\DB_1$ and $j_2:\DB\to\DB_2$  and a bijection $f:D_1\to D_2,$ such that:
\begin{itemize}
\item[$\cdot$] $f\circ{}j_1=j_2$;
\item[$\cdot$] $f\res_{D_1\setminus C}$ is an isomorphism between $\langle D_1\setminus C\rangle_{\DB_1}$ and $\langle D_2\setminus C\rangle_{\DB_2}$;
\item[$\cdot$] $f$ is not an isomorphism between $\DB_1$ and $\DB_2$.
\end{itemize}  
Where, in order to keep the notation simple we write $D_1\setminus C$ instead of $D_1\setminus j_1(i(C))$, etc.
We say that $\mathcal{K}$ \emph{splits} if there is a $\boldsymbol{C}\in\mathcal{K}$ that splits $\mathcal{K}$.
\end{definition}
\begin{rem}
If $\KC_1,\ldots,\KC_5$ are the \Fraisse{} classes that correspond to the structures $\MB_1,\ldots,\MB_5$ of the Example \ref{example}, then $\KC_3,\KC_4$ and $\KC_5$ split and $\KC_1,\KC_2$ do not split.   
\end{rem}
Let $\boldsymbol{M}=\boldsymbol{M}(\mathcal{K})$ be the \Fraisse{} structure associated to $\mathcal{K}$ and let $X$ be a finite subset of $M$. We denote by $\mathcal{L}_X,$ the language obtained by adding to $\mathcal{L}$ a constant $c_x$ for every $x\in X$.
\begin{definition}
Let $\MB$ be a \Fraisse{} structure and let $\boldsymbol{X}$ be a finite substructure of $\MB$. By a \emph{realized quantifier free type}(rqf-type) $p=p(y)$ over $\boldsymbol{X}$ we mean a set of quantifier free $\mathcal{L}_X$-formulas $\phi$ in one variable $y$, for which there is a $z\in M$ such that
$$\phi\in p \Leftrightarrow \boldsymbol{M}\models\phi(z).$$
As a slight abuse of notation, we will not exclude the possibility of $X$ being the empty set in this definition.
Finally, we say that the rqf-type $p$ over $\boldsymbol{X}$ is \emph{non-trivial} if for all $x\in X$, the formula $\phi(y)\equiv (y=c_x)$ does not belong to $p$.    
\end{definition}
 Notice that if $\MB$ is a \Fraisse{} structure then $p$ is a rqf-type over a finite substructure $\boldsymbol{X}$ of $\boldsymbol{M}$ if and only if  $\langle X,z\rangle_{M}\in\mathcal{K}$. So, it makes sense to talk about realized quantifier free types over a finite structure $\boldsymbol{X}$, whenever $\boldsymbol{X}\in\mathcal{K}$.
\begin{definition}
Let $\boldsymbol{X},\boldsymbol{X}'\in\mathcal{K}$ be $\mathcal{L}$-structures of the same  size, and let $f:X\to X'$ be a bijection between their domains. Let also $p=p(y)$ be a realized quantifier free type over $\boldsymbol{X}$. We define $f[p]=f[p](y)$ to be following set of $\mathcal{L}_{X'}$-formulas:
$$\varphi(y,c_{x_1},\ldots,c_{x_n})\in f[p]\,\,\Leftrightarrow\,\,\varphi(y,c_{f^{-1}(x_1)},\ldots,c_{f^{-1}(x_n)})\in p.$$  
\end{definition}

Notice that if $p$ is a rqf-type and \,$f$\, is an isomorphism between $\boldsymbol{X}$ and $\boldsymbol{X}'$, then the set of quantifier free formulas $f[p]$ is a rqf-type over $\boldsymbol{X}'$.

\begin{definition}
Let $\AB$ be a substructure of  a \Fraisse{} structure $\MB$. We say that $\AB$ \emph{absorbs points} if for every finite subset $X$ of $M$ and for every non-trivial rqf-type $p$ over $\boldsymbol{X}$, there is an $a\in A$ such that $\boldsymbol{M}\models p(a)$.
\end{definition}
Notice that if $\AB$ absorbs points in $\boldsymbol{M}$ then $A$ is not empty. In particular, $A$ is infinite and $\boldsymbol{A}$ is isomorphic to $\boldsymbol{M}$.

For the general \Fraisse{} structure we cannot hope that we can find even one subset $A$ of $M$ such that both $\AB$ and $\AB^c$ absorb points.  For example, take any \Fraisse{} structure $\boldsymbol{M}$. Extend the language $\mathcal{L}$ to $\mathcal{L}'=\mathcal{L}\cup\{u\}$ so that it includes a new unary predicate $u$ and turn $\boldsymbol{M}$ into a $\mathcal{L}'$ structure $\boldsymbol{M}'$ by letting for some $x_0\in M$ the following:  
\[\boldsymbol{M}'\models u(x_0)\quad\text{and}\quad\boldsymbol{M}'\models \forall x\big((x\neq x_0) \Rightarrow \neg u(x)\big).\]
This new structure is a \Fraisse{} limit of a new class $\mathcal{K}',$ but for no subset $A$ of $M'$ both $\AB$ and $\AB^c$ absorb points since then both $A$ and $A^c$ should contain a point that satisfies $u$. However, if we assume that our \Fraisse{} structure has no algebraicity or equivalently if the corresponding \Fraisse{} class has $\mathrm{SAP}$, then we get the following result. 
\begin{lm}\label{pair}
Let $\boldsymbol{M}$ be the \Fraisse{} limit of the \Fraisse{} class $\mathcal{K}$. Assume moreover, that $\mathcal{K}$ has  $\mathrm{SAP}$. Then, for a generic substructure $\AB$ we have that both $\AB$ and $\AB^c$ absorb points.    
\end{lm}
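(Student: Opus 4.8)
The plan is to use the Banach--Mazur game (Theorem~\ref{banach-mazur}) to show that the set of substructures $\AB$ for which both $\AB$ and $\AB^c$ absorb points is comeager in $2^\N$, by exhibiting a winning strategy for player II. First I would recast the ``absorbs points'' condition as a countable intersection of requirements: for each pair $(X,p)$ consisting of a finite $X\subset M$ and a non-trivial rqf-type $p$ over $\boldsymbol{X}$, the set $R_{X,p}=\{A : \exists\,a\in A,\ \boldsymbol{M}\models p(a)\}$ should be met, and symmetrically $R^c_{X,p}=\{A : \exists\,a\in A^c,\ \boldsymbol{M}\models p(a)\}$. Since $\LC$ is countable and relational and each finite $X$ admits only countably many rqf-types, there are only countably many such requirements; it thus suffices to hand player II a strategy realizing all of them in the limit. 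The basic open sets of $2^\N$ are determined by a finite partial characteristic function, i.e.\ a finite decision ``$n\in A$'' or ``$n\notin A$'' for finitely many $n$; player I's open sets refine such a decision, and player II answers with a further finite decision.

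Next I would fix a bookkeeping enumeration of all requirements $R_{X,p}$ and $R^c_{X,p}$ and, at stage $n$ of the game, have player II address the $n$-th requirement while staying inside the open set $U_n$ just played by player I. Suppose the current requirement is $R_{X,p}$. The open set $U_n$ fixes membership of only finitely many natural numbers, so in particular it has decided ``$\in A$'' or ``$\notin A$'' for only finitely many points. The key point is to produce a single point $a\in M$ with $\boldsymbol{M}\models p(a)$ that has not yet been decided, and then let player II shrink to the basic open set that additionally declares $a\in A$; this lands inside $U_n$ and forces $a\in A$ for every $A$ in the remaining play, meeting $R_{X,p}$. For $R^c_{X,p}$ one instead declares $a\notin A$. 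Because $\bigcap_n V_n$ is a single point of $2^\N$, i.e.\ a single $A$, and every requirement has been met along the way, that $A$ has both $\AB$ and $\AB^c$ absorbing points; by Theorem~\ref{banach-mazur} the corresponding set is comeager.

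The crux, and the only place where the $\mathrm{SAP}$ (equivalently, no algebraicity) hypothesis is used, is the existence of the fresh realizing point $a$. A priori the type $p$ over $\boldsymbol{X}$ is realized in $M$, but possibly only at points already decided by $U_n$, or only at the finitely many points that earlier stages have committed to $A$ (resp.\ to $A^c$); if so, player II cannot simultaneously respect $U_n$ and meet the requirement. Here is where I would invoke $\mathrm{SAP}$: let $Y\supset X$ be the finite set consisting of $X$ together with all points already decided by the play, and let $y$ be a realization of $p$ in $M$, so $\boldsymbol{Y}'=\langle Y\cup\{y\}\rangle_{\MB}\in\KC$. Strong amalgamation of $\boldsymbol{Y}'$ with a disjoint isomorphic copy of $\boldsymbol{Y}'$ over the common substructure $\boldsymbol{Y}$ produces, inside $\KC$ and hence inside $\MB$ by ultrahomogeneity, a \emph{new} point $a\notin Y$ realizing the same rqf-type $p$ over $\boldsymbol{X}$; the non-triviality of $p$ guarantees $y\neq c_x$ for all $x\in X$ so this amalgamation does not force $a$ into $X$. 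This $a$ is then free to be assigned to $A$ or to $A^c$ as the requirement demands. I expect this amalgamation step — verifying that no algebraicity yields infinitely many, hence always fresh, realizations of every non-trivial rqf-type over any finite set — to be the main technical obstacle; the game-theoretic bookkeeping around it is routine.
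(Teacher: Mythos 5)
Your game-theoretic architecture is genuinely different from the paper's proof, which uses no game at all: the paper writes the target set explicitly as
$\mathcal{A}=\bigcap_{(X,p)}\,\bigcup_{n\neq m,\ n,m\in N_{(X,p)}}\{x\in 2^M : x(n)=1,\ x(m)=0\}$,
where $N_{(X,p)}$ is the set of realizations of the non-trivial rqf-type $p$ over $\boldsymbol{X}$, observes that this is $G_\delta$, and obtains density from one key fact: \emph{every non-trivial rqf-type over a finite substructure has infinitely many realizations}, which the paper reads off directly from no algebraicity (the orbit of any realization under the pointwise stabilizer of $X$ is infinite). Your Banach--Mazur bookkeeping is a heavier but legitimate substitute, and it hinges on exactly the same key fact: at each stage you need a realization of $p$ avoiding the finitely many coordinates already decided. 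The problem is that your justification of this fact does not work as stated.

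Concretely, you amalgamate $\boldsymbol{Y}'=\langle Y\cup\{y\}\rangle_{\MB}$ with a disjoint copy of itself \emph{over} $\boldsymbol{Y}$. Strong amalgamation over a base can only separate points lying \emph{outside} that base, so this produces a fresh realization only when your chosen realization $y$ already lies outside $Y$ --- precisely the case in which no amalgamation was needed. In the case you explicitly set out to handle (``$p$ is realized possibly only at points already decided''), every realization $y$ belongs to $Y$, hence $\boldsymbol{Y}'=\boldsymbol{Y}$ and you are amalgamating $\boldsymbol{Y}$ with itself over $\boldsymbol{Y}$: SAP is satisfied by $\boldsymbol{D}=\boldsymbol{Y}$ itself, no new point is forced to exist, and non-triviality of $p$ (which only gives $y\notin X$, not $y\notin Y$) cannot rescue this. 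The repair is to amalgamate over $\boldsymbol{X}$, the domain of the type, rather than over $\boldsymbol{Y}$, the set to be avoided: apply SAP to the inclusions $\boldsymbol{X}\hookrightarrow\boldsymbol{Y}$ and $\boldsymbol{X}\hookrightarrow\langle X\cup\{y\}\rangle_{\MB}$ to get $\boldsymbol{D}\in\KC$ with embeddings $i,j$ agreeing on $X$ and satisfying $i(Y)\cap j(X\cup\{y\})=i(X)$; then realize $\boldsymbol{D}$ inside $\MB$ by an embedding fixing $Y$ pointwise (extension property of the \Fraisse{} limit). Since $y\notin X$ by non-triviality, the image $a$ of $j(y)$ is a realization of $p$ over $\boldsymbol{X}$ with $a\notin Y$, as required; iterating gives infinitely many realizations, recovering the paper's key fact. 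With that single step repaired, your strategy for player II is sound (one further small correction: $\bigcap_n V_n$ need not be a singleton, but this is harmless, since each requirement, once forced on some $V_n$, holds for every $A\in\bigcap_n V_n$).
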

\begin{proof}
Recall that we identify the domain $M$ with the set of natural numbers. We will show that the set $\mathcal{A}$, of all the subsets $A$ of $M$ for which both $\AB$ and $\AB^c$ absorb points is a dense $G_\delta$ subset of $2^M$. Let
$$\mathcal{I}=\{ \,(X,p)\,:\,X\subset M,\,\text{finite,}\,\,\, p\,\,\, \text{a non trivial rqf-type over}\,\,\boldsymbol{X}  \},$$
and notice that $\mathcal{I}$ is countable.  Let $\{i_m : m\in\N\}$ be an enumeration of $\mathcal{I}$. For fixed $i=(X,p),$ let $N_i$ be the subset of $M$, of  all elements $n$ such that $\boldsymbol{M}\models p(n)$. We have that
\[\mathcal{A}=\bigcap_{i\in\mathcal{I}}  \,\,\bigcup_{\substack{n\neq m \\n,m\in N_i}} \big\{ x\in 2^M : x(n)=1, x(m)=0\big\}.\]
Therefore $\mathcal{A}$ is a $G_\delta$ subset of $2^M$. To see that $\mathcal{A}$ is also dense in $2^M$, notice that since $\MB$ has no algebraicity, for every finite substructure $\boldsymbol{X}$ of $\MB$ and every rqf-type $p$ over $\boldsymbol{X}$ there are infinitely many points $a\in M$ with $\MB\models p(a)$.
\end{proof}
\begin{coro}\label{corollaryGenericIsIsomo}
Let $\MB$ be a \Fraisse{} structure which has no algebraicity. Then, for a generic substructure $\AB$ of $\MB$, the structure $\AB$ is isomorphic to $\MB$.
\end{coro}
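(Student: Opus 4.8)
The plan is to reduce the corollary to Lemma~\ref{pair} together with the uniqueness clause of \Fraisse{}'s theorem. By Lemma~\ref{pair}, the set of $A\subseteq M$ for which both $\AB$ and $\AB^c$ absorb points is comeager; in particular, for a generic $\AB$ the structure $\AB$ absorbs points. So it suffices to prove the purely structural statement already flagged in the remark preceding Lemma~\ref{pair}: if $\AB$ absorbs points, then $\AB\cong\MB$. Since $\MB=\MB(\KC)$ is the unique countably infinite ultrahomogeneous structure with $\mathrm{Age}(\MB)=\KC$, I would establish this by verifying that $\AB$ is itself a \Fraisse{} limit of $\KC$, i.e.\ that $\AB$ is countably infinite, that $\mathrm{Age}(\AB)=\KC$, and that $\AB$ is ultrahomogeneous.

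First, $A$ is infinite (hence $\AB$ is countably infinite, as $A\subseteq M=\N$): applying the absorption condition with $X=\emptyset$ — where every rqf-type is vacuously non-trivial — and using that no algebraicity yields infinitely many realizations keeps producing points of $A$. The inclusion $\mathrm{Age}(\AB)\subseteq\mathrm{Age}(\MB)=\KC$ is automatic. For the reverse inclusion, given $\boldsymbol{Y}\in\KC$ with an enumeration $Y=\{y_1,\ldots,y_n\}$, I would build an embedding of $\boldsymbol{Y}$ into $\AB$ one point at a time: having matched $y_1,\ldots,y_k$ to points of $A$, I transport the rqf-type $p$ of $y_{k+1}$ over $\langle y_1,\ldots,y_k\rangle$ along the partial isomorphism to a rqf-type over a finite substructure $\boldsymbol{X}\subseteq\AB$, which is realized in $\MB$ because $\boldsymbol{Y}\in\KC=\mathrm{Age}(\MB)$ and is non-trivial because $y_{k+1}$ is a fresh point. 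Absorbing points then supplies a realization $a_{k+1}\in A$, necessarily distinct from the earlier choices, so $\mathrm{Age}(\AB)=\KC$.

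The heart of the argument is ultrahomogeneity of $\AB$, which I would obtain by a back-and-forth driven by the absorption property. Fix an enumeration of $A$ and an isomorphism $g_0\colon\boldsymbol{F}_1\to\boldsymbol{F}_2$ between finite substructures of $\AB$. To extend a partial isomorphism $g$ (with finite domain and range inside $A$) so as to capture the next element $a$ of $A$ in its domain, I take the rqf-type $p$ of $a$ over $\langle\mathrm{dom}(g)\rangle$ and form $g[p]$ over $\langle\mathrm{range}(g)\rangle$; this is a rqf-type, as observed right after the definition of $f[p]$, since $g$ is an isomorphism. I then note that $g[p]$ is realized in $\MB$ and is non-trivial whenever $a\notin\mathrm{dom}(g)$, and invoke that $\AB$ absorbs points to find a realization in $A$ to which $g$ extends. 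The symmetric step handles the range. The union of the resulting chain is an automorphism of $\AB$ extending $g_0$, so $\AB$ is ultrahomogeneous, and \Fraisse{}'s theorem gives $\AB\cong\MB$.

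The main obstacle I anticipate is bookkeeping rather than conceptual: at every step one must check that the transported type is (a) a genuine rqf-type realized somewhere in $\MB$, so that the absorption hypothesis applies, and (b) non-trivial, which is exactly what guarantees the realization chosen in $A$ is a new element and hence that the back-and-forth genuinely enlarges the partial isomorphism. Both points turn on the interplay between the non-triviality clause in the definition of absorbing points and the freshness of the point being matched; the definitions are arranged so that these verifications remain routine.
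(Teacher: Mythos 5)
Your proposal is correct, but it takes a genuinely different route from the paper's. The paper's proof (stated in one line) is a single direct back-and-forth between $\AB$ and $\MB$: transport the realized quantifier-free type of the next point of $M$ along the current finite partial isomorphism and use the absorption property to realize it inside $A$; the opposite direction needs no absorption at all, only realizability of transported rqf-types in $\MB$, which is exactly the observation following the definition of $f[p]$. You instead verify that $\AB$ is itself a \Fraisse{} limit of $\KC$ --- countably infinite, $\mathrm{Age}(\AB)=\KC$, ultrahomogeneous --- and then invoke the uniqueness clause of \Fraisse{}'s theorem. The engine is identical (type transport plus absorption), but your decomposition does strictly more work: ultrahomogeneity of $\AB$ requires absorption on \emph{both} sides of the back-and-forth, and you need the separate age computation; what it buys is modularity and the explicit (slightly more informative) packaging that the generic substructure is a \Fraisse{} limit of the same class, with the final isomorphism outsourced to a cited theorem rather than constructed by hand. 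One small slip worth fixing: your argument that $A$ is infinite using only $X=\emptyset$ does not suffice, since over the empty set there may be exactly one rqf-type (e.g.\ for the structure $\MB_1=(\N)$ in the empty language), so that single application of absorption yields only one point of $A$. To iterate, take $X$ to be the finite set of points of $A$ already produced, pick any $z\in M\setminus X$ (whose rqf-type over $\boldsymbol{X}$ is automatically non-trivial), and apply absorption to get a fresh point of $A$. This is harmless for your proof: infiniteness of $A$ in fact already follows from your age computation, because $\KC$ contains structures of arbitrarily large finite size.
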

\begin{proof}
Use the fact that $\AB$ absorbs points to built a back and forth system between $\AB$ and $\MB$.
\end{proof}
It is also immediate from  Lemma \ref{splits} above and the fact that Cartesian product of comeager sets is comeager, that under the  assumptions of Lemma \ref{splits}, for a generic pair $\AB,\BB,$ of substructures of $\MB$, all $\AB,\AB^c,\BB,\BB^c,$ absorb points.

A \emph{partial isomorphism} of $\MB$ is a map $f:N\to M$ with $N\subset M$ which happens to be an isomorphism between $\NB$ and $\langle f(N)\rangle_{\MB}$. We will write $\mathrm{dom}f$ to denote the domain of $f$. We say that $f$ is a \emph{finite partial isomorphism} if $f$ is a partial isomorphism with finite domain. If $f_1,f_2$ are two partial isomorphisms of $\MB$, we say that $f_1$ and $f_2$ are \emph{compatible} if there is a partial  isomorphism $f$ of $\MB$ that extends both $f_1$ and $f_2$.

\begin{lm}\label{not extend}
Let $\mathcal{K}$ be a \Fraisse{} class that splits and has the $\mathrm{SAP}$ and let $\boldsymbol{M}=\boldsymbol{M}(\mathcal{K})$ be the \Fraisse{} limit of $\mathcal{K}$. Let also  $A,B\subset M$ such that all $\AB,\AB^c,$ and $\BB,\BB^c$, absorb points. Then, $\mathcal{E}(\AB,\BB)$ is a meager subset of $\mathrm{Iso}(\boldsymbol{A},\boldsymbol{B})$.
\end{lm}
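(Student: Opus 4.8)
The plan is to show that the complement $\mathcal{N}=\Iso(\AB,\BB)\stm\mathcal{E}(\AB,\BB)$ is comeager and to invoke Theorem \ref{banach-mazur}: it suffices to produce a winning strategy for player II in the game $G^{**}(\mathcal{N},\Iso(\AB,\BB))$. Recall that a basic open subset of $\Iso(\AB,\BB)$ is a set $[\sigma]$ of all isomorphisms extending a finite partial isomorphism $\sigma$ with $\mathrm{dom}\,\sigma\subseteq A$ and $\mathrm{ran}\,\sigma\subseteq B$; since $\AB,\BB\cong\MB$ (because they absorb points), every such $\sigma$ extends to a full isomorphism, so $[\sigma]\neq\emptyset$. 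Before play begins I fix, using that $\AB^c$ absorbs points, a copy $C_A\subseteq A^c$ of some structure $\CB$ that splits $\KC$, together with an enumeration $\{(C_k,\theta_k):k\in\N\}$ of all pairs in which $C_k\subseteq B^c$ is a copy of $\CB$ and $\theta_k\colon C_A\to C_k$ is an isomorphism. There are only countably many such pairs. The goal of the strategy is to arrange that the limit isomorphism fails to map the fixed finite set $C_A$ anywhere consistently, so that no global extension can exist.

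Player II's strategy at round $n$ is as follows. Given player I's open set $U_n$, first choose a finite partial isomorphism $\tau\supseteq\sigma_{n-1}$ with $[\tau]\subseteq U_n$, and then \emph{process} the pair $(C_n,\theta_n)$. If the map $\tau\cup\theta_n$ (equal to $\tau$ on $\mathrm{dom}\,\tau$ and to $\theta_n$ on $C_A$) is \emph{not} a partial isomorphism of $\MB$, set $\sigma_n=\tau$ and do nothing more: some tuple inside $\mathrm{dom}\,\tau\cup C_A$ already witnesses a relation that $\tau\cup\theta_n$ violates, and enlarging $\tau$ only adds constraints, so this obstruction is permanent. Otherwise $\tau\cup\theta_n$ is an isomorphism of the finite structures $\DB=\langle\mathrm{dom}\,\tau\cup C_A\rangle$ and $\DB'=\langle\mathrm{ran}\,\tau\cup C_n\rangle$; I apply the splitting of $\KC$ to the inclusion $\CB\cong C_A\hookrightarrow\DB$, obtaining $\DB_1,\DB_2\in\KC$, embeddings $j_1,j_2$, and a bijection $f$ that is an isomorphism off $C_A$ but alters some relation between $C_A$ and the new points $E=D_1\stm C_A=D_2\stm C_A$. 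Using that $\AB$ absorbs points I realize $E$ inside $A$ as a tuple $\bar a$ with $\langle\mathrm{dom}\,\tau\cup C_A\cup\bar a\rangle\cong\DB_1$ (adding the points of $E$ one at a time, each via a nontrivial rqf-type realized in $A$); using that $\BB$ absorbs points, and transporting $\DB_2$ along the isomorphism $\tau\cup\theta_n\colon\DB\to\DB'$, I realize $E$ inside $B$ as a tuple $\bar b$ with $\langle\mathrm{ran}\,\tau\cup C_n\cup\bar b\rangle\cong\DB_2$. I then set $\sigma_n=\tau\cup\{\bar a\mapsto\bar b\}$ and $V_n=[\sigma_n]$.

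Because $\DB_1$ and $\DB_2$ agree off $C_A$ and $\mathrm{dom}\,\tau\subseteq D\stm C_A$, the assignment $\bar a\mapsto\bar b$ respects all relations to $\mathrm{dom}\,\tau$ and $\mathrm{ran}\,\tau$, so $\sigma_n$ is a genuine partial isomorphism and $V_n\subseteq[\tau]\subseteq U_n$; because they differ on $C_A$–$E$ relations, $\sigma_n$ records a relation between $\bar a$ and $C_A$ that differs from the $\theta_n$–image of the relation between $\bar b$ and $C_n$. The verification is then immediate: let $g\in\bigcap_n V_n$ and suppose for contradiction that $g$ extends to $\tilde g\in\Aut(\MB)$. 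Since $\tilde g(A^c)=B^c$, the image $\tilde g(C_A)\subseteq B^c$ is a copy of $\CB$, so $\tilde g(C_A)=C_k$ and $\tilde g\res C_A=\theta_k$ for some $k$. As $\tilde g\res A=g\supseteq\sigma_k$, applying the relation-preserving map $\tilde g$ to the witnessing tuple produced at round $k$ (a tuple contained in $\mathrm{dom}\,\sigma_k\cup C_A$, present in both cases of the strategy) yields a contradiction. Hence $g\in\mathcal{N}$, player II wins, and $\mathcal{E}(\AB,\BB)$ is meager.

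The crux is the active killing step, and the point I expect to be delicate is the matching of the two base configurations: I can only transport $\DB_2$ into $B$ over $\mathrm{ran}\,\tau\cup C_n$ once $\tau\cup\theta_n$ is already an isomorphism $\DB\to\DB'$, which is exactly why the strategy branches into the two cases above — in the failing case the pair is permanently blocked and needs no action, while in the surviving case splitting supplies precisely the freedom to alter a $C_A$-relation without disturbing anything over $\mathrm{dom}\,\tau$. The remaining points are routine: absorption of $A$ and of $B$ lets me realize the prescribed finite types \emph{inside} $A$ and \emph{inside} $B$ (so that the new pairs genuinely belong to $g$), and every $[\sigma_n]$ is nonempty because finite partial isomorphisms between the copies $\AB,\BB$ of $\MB$ extend to full isomorphisms.
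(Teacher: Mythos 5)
Your proof is correct, and its engine is the same as the paper's: fix a copy $C_A$ of the splitting structure $\CB$ inside $A^c$, and at each round of the Banach--Mazur game use splitting together with absorption (of $\AB$ on the domain side, of $\BB$ on the range side) to append to the current position a map $\bar a\mapsto\bar b$ that is a partial isomorphism away from $C_A$ but cannot coexist with any automorphism acting on $C_A$ in the prescribed way. Where you genuinely differ is the bookkeeping of the diagonalization. The paper enumerates the finite partial isomorphisms $g_k$ with domain containing $C_*$ that are compatible with some element of $\mathcal{E}(\AB,\BB)$, attacks at round $n$ the least-indexed $g_k$ compatible with the current position, and folds a back-and-forth into player II's moves; one must then argue that every relevant $g_k$ is eventually attacked. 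You instead enumerate the possible targets $(C_k,\theta_k)$ with $C_k\subseteq B^c$ (justified by $\tilde g(A^c)=B^c$), and treat pair $n$ at round $n$ unconditionally via your two cases. This buys two simplifications: no least-index argument, and no back-and-forth, since any $g\in\bigcap_n V_n$ is by definition already in $\Iso(\AB,\BB)$ and a putative extension $\tilde g$ restricts on $\mathrm{dom}\,\sigma_k\cup C_A$ to the non-isomorphism built at round $k$. A further merit: because your splitting base is exactly $\langle \mathrm{dom}\,\tau\cup C_A\rangle_{\MB}$, the extension $\tilde g$ is pinned down on the \emph{entire} base ($\sigma_k$ controls $\mathrm{dom}\,\tau\cup\bar a$, $\theta_k$ controls $C_A$), so the contradiction is airtight; in the paper's version the base $D$ may contain points of $A^c$ outside $\mathrm{dom}\,g_{k_n}$, and the closing claim is cleanest for the minimal choice $g=g_{k_n}\cup h_n$, which your setup builds in automatically. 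One cosmetic slip: your $E$ should be $D_1\stm j_1(D)$, the genuinely new points (which is what $\bar a$ actually realizes), rather than $D_1\stm C_A$; relatedly, the violating tuple may also involve points of $\mathrm{dom}\,\tau$, which your verification handles anyway since $\tilde g$ is controlled there too.
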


\begin{proof}
Let $A,A^c,$ and $B,B^c$, as above and let $\mathcal{N}=\mathrm{Iso}(\boldsymbol{A},\boldsymbol{B})\setminus\mathcal{E}(\AB,\BB).$
We will show that there is a winning strategy for player II in the Banach-Mazur game $G^{**}\big(\mathcal{N},\mathrm{Iso}(\boldsymbol{A},\boldsymbol{B})\big)$. Therefore,  $\mathcal{N}$ is comeager subset of $\mathrm{Iso}(\boldsymbol{A},\boldsymbol{B})$ by Theorem \ref{banach-mazur}. 

Take $\boldsymbol{C}\in\mathcal{K}$, such that $\boldsymbol{C}$ splits $\mathcal{K}$. Since  $\AB^c$ absorbs points, we can realize $\boldsymbol{C}$ inside $\boldsymbol{A}^c$. Let $\boldsymbol{C}_*$ be any such a realization. Let also $\{g_k:k\in\N\}$ be an enumeration of all finite partial isomorphisms of $\MB$ who are compatible with some $h\in\mathcal{E}(\AB,\BB)$ and whose domain includes $C_*$. Obviously, every $h\in\mathcal{E}(\AB,\BB)$ is compatible with some $g_k$ in the above list. Player II will pick his moves so that no matter what player I does, the resulting map $h$ of the play will belong in $\mathrm{Iso}(\boldsymbol{A},\boldsymbol{B})$ and moreover, $h$ will not be compatible with any $g_k$. Therefore, by the above observation, $h$ will belong in $\mathcal{N}$.

 For the first task, notice that by incorporating additionally in the moves of player II a ``back and forth" system between $\AB$ and $\BB$, we can assume without the loss of generality that the result of the play will indeed be an isomorphism $h$ from $\AB$ to $\BB$. Assume now that the game is in its $n$-th step, with $n\geq 0$, and player I has played an open set $U_n\subset\mathrm{Iso}(\boldsymbol{A},\boldsymbol{B})$  which is identified with an partial isomorphism $h_n$, between finite substructures of $\boldsymbol{A}$ and $\boldsymbol{B}$. Player II will proceed as follows: let $k_n$ be the minimum index so that $g_{k_n}$ is compatible with $h_n$ and let $g$ be any finite partial isomorphism compatible with some $h\in\mathcal{E}(\AB,\BB)$ so that $g$ extends both $g_{k_n}$ and $h_n$.  Let $D$ be the domain of $g$ and notice that $C_*\subset D$.  By Definition \ref{splits}, and because $\boldsymbol{C}$ splits $\mathcal{K}$, there are structures $\boldsymbol{D}_1,\boldsymbol{D}_2\in\KC$, embeddings $j_1:\DB\to\DB_1$ and $j_2:\DB\to\DB_2$  and a bijection $f:D_1\to D_2,$ such that:
\begin{itemize}
\item[$\cdot$] $f\circ{}j_1=j_2$;
\item[$\cdot$] $f\res_{D_1\setminus C}$ is an isomorphism between $\langle D_1\setminus C\rangle_{\DB_1}$ and $\langle D_2\setminus C\rangle_{\DB_2}$;
\item[$\cdot$] $f$ is not an isomorphism between $\DB_1$ and $\DB_2$.
\end{itemize}  
Since $A$ absorbs points, we can extend $D$ to $\tilde{D}_1\subset M$ so that $\tilde{\DB}_1\simeq\DB_1$ and all points of $\tilde{D}_1\setminus D$ lie inside $A$. Similarly, since $\langle g(D)\rangle_{\MB}\simeq\DB$ and since $B$ absorbs points, we can extend $g(D)$ to $\tilde{D}_2\subset M$ so that $\tilde{\DB}_2\simeq\DB_2$ and all points of $\tilde{D}_2\setminus g(D)$ lie inside $B$. The function $f$ can be now realized as a bijection $\tilde{f}:\tilde{D}_1\to\tilde{D}_2$ which extends $g$. The function $\tilde{f}$ is not a partial isomorphism of $\MB$ however, if $E$ is any subset  of the domain of $\tilde{f}$ that excludes $C_*$, $\tilde{f}\res_E$ is a partial isomorphism of $\MB$.  

Player II will now reply in his $n$-th round with the open set $V_n$, given by the partial isomorphism $\tilde{h}_n=\tilde{f}\res_{\mathrm{dom}\tilde{f}\cap A}$. Notice that any extension of $\tilde{h}_n$ to an $h\in\mathcal{E}(\AB,\BB)$ is not compatible with $g_{k_n}$. Hence, the game will end with an isomorpism $h=\cup h_n=\cup \tilde{h}_n$ between $\AB$ and $\BB$, which cannot be further extended to include $C_*$ in its domain and therefore $h\in\mathcal{N}$. 
\end{proof}
Together with the Lemma \ref{pair}, Lemma \ref{not extend} proves the one direction of Theorem \ref{theorem}. For the other direction we need first some lemmas.

 Let $\MB$ be the \Fraisse{} limit of a class $\KC$ that does not split. Then, for every $c\in M$ there is a finite $D\subset M$ with $c\in D$ such that for every finite $D_1,D_2\supset D$ and every bijection $f:D_1\to D_2$ we have that : if $f\res_{D_1/\{c\}}$ is an isomorphism between $\langle D_1/\{c\}\rangle_{\MB}$ and $\langle D_2/\{c\}\rangle_{\MB}$, then $f$ is an isomorphism between $\DB_1$ and $\DB_2$. 
 
 In other words, for every $c\in M$, there is a finite $K\subset M$ ($K=D/\{c\}$ above) so that the rqf-type of $c$ over $\KB$ completely determines the rqf-type of $c$ over any finite extension $\FB$ of $\KB$. It will be convenient to settle on the following definition.

\begin{definition}\label{DefDoesNotSplit}
Let $\MB$ be a \Fraisse{} structure, $c\in M$ and $K\subset M$ finite with $c\not\in K$. We say that $\KB$ \emph{controls} $c$ if for every $F_1,F_2\subset M$ finite with $K\subset F_1,F_2$ and every bijection $f: F_1\cup\{c\}\to F_2\cup\{c\}$ with $f\res_{K\cup\{c\}}=\mathrm{id}$ we have that: if $f\res_{F_1}$ is an isomorphism between $\FB_1$ and $\FB_2$ then $f$ is an isomorphism between $\langle F_1\cup\{c\}\rangle_{\MB}$ and $\langle F_2\cup\{c\}\rangle_{\MB}$.
\end{definition}

In the following lemma we record some trivial facts regarding the above notion. 
 
\begin{lm}\label{trivial Lemma}
Let $\KC$ be a \Fraisse{} class that does not split and let $\MB$ be the \Fraisse{} limit of $\KC$. Let also $c\in M$. Then:
\begin{enumerate}
\item there is a finite $K\subset M$ so that $\KB$ controls $c$; \label{trivial Lemma 1}
\item if $\KB$ controls $c$, $L$ is a finite subset of $M$ with $K\subset L$ and $c\not\in L$ then $\LB$ also controls $c$; \label{trivial Lemma 2}
\item if $\KB$ controls $c$ and $f\in\Aut{}(\MB)$, then $\langle f(K)\rangle_{\MB}$ controls $f(c)$. \label{trivial Lemma 3}
\end{enumerate}
\end{lm}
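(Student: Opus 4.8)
The plan is to verify the three items of Lemma \ref{trivial Lemma} in order, since each is a ``trivial fact'' following from the definition of ``controls,'' the hypothesis that $\KC$ does not split, and the homogeneity of $\MB$. For item \eqref{trivial Lemma 1}, I would unwind what ``$\KC$ does not split'' gives us via the paragraph preceding Definition \ref{DefDoesNotSplit}. Since $\KC$ does not split, no $\CB\in\KC$ splits $\KC$; applying this with $\CB=\langle\{c\}\rangle_\MB$, I get that for some embedding $i:\CB\to\DB$ (equivalently, some finite $\DB\in\KC$ containing $c$), there are no $\DB_1,\DB_2,j_1,j_2,f$ as in Definition \ref{splits}. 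Negating the existential quantifier in the definition of splitting yields exactly the statement displayed just before Definition \ref{DefDoesNotSplit}: there is a finite $D\subset M$ with $c\in D$ so that any bijection $f:D_1\to D_2$ between finite extensions that restricts to an isomorphism on the complement of $\{c\}$ must itself be an isomorphism. Setting $K=D\setminus\{c\}$ then translates this into the language of Definition \ref{DefDoesNotSplit}, giving a finite $K$ that controls $c$. The one subtlety to check is matching the two formulations of the complement-isomorphism condition (bijections of $D_1,D_2$ versus bijections fixing $K\cup\{c\}$ pointwise), which is handled by composing with automorphisms.

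For item \eqref{trivial Lemma 2}, suppose $\KB$ controls $c$ and $K\subset L$ with $c\notin L$; I want to show $\LB$ controls $c$. Given finite $F_1,F_2\supset L$ and a bijection $f:F_1\cup\{c\}\to F_2\cup\{c\}$ with $f\res_{L\cup\{c\}}=\mathrm{id}$ such that $f\res_{F_1}$ is an isomorphism $\FB_1\to\FB_2$, I note that $K\subset L\subset F_1,F_2$ and $f\res_{K\cup\{c\}}=\mathrm{id}$ (since it is already the identity on the larger set $L\cup\{c\}$). Thus $F_1,F_2$ are themselves finite sets containing $K$, and $f$ is a bijection fixing $K\cup\{c\}$ whose restriction to $F_1$ is an isomorphism. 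Applying the definition of ``$\KB$ controls $c$'' directly to these $F_1,F_2,f$ gives that $f$ is an isomorphism between $\langle F_1\cup\{c\}\rangle_\MB$ and $\langle F_2\cup\{c\}\rangle_\MB$, which is exactly the conclusion required for $\LB$ to control $c$. So this item is essentially immediate: a larger controlling set imposes a weaker hypothesis on $F_1,F_2$, hence is easier to satisfy.

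For item \eqref{trivial Lemma 3}, suppose $\KB$ controls $c$ and $g\in\Aut(\MB)$; I want $\langle g(K)\rangle_\MB$ to control $g(c)$. Given finite $F_1,F_2\supset g(K)$ and a bijection $h:F_1\cup\{g(c)\}\to F_2\cup\{g(c)\}$ with $h\res_{g(K)\cup\{g(c)\}}=\mathrm{id}$ and $h\res_{F_1}$ an isomorphism, I would conjugate back by $g$: set $F_1'=g^{-1}(F_1)$, $F_2'=g^{-1}(F_2)$, and $f=g^{-1}\circ h\circ g$, a bijection $F_1'\cup\{c\}\to F_2'\cup\{c\}$. Since $g$ is an automorphism, $f\res_{F_1'}$ is an isomorphism $\FB_1'\to\FB_2'$ iff $h\res_{F_1}$ is, and $f$ fixes $K\cup\{c\}$ pointwise because $h$ fixes $g(K)\cup\{g(c)\}$. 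Applying ``$\KB$ controls $c$'' to $F_1',F_2',f$ shows $f$ is an isomorphism on $\langle F_1'\cup\{c\}\rangle_\MB\to\langle F_2'\cup\{c\}\rangle_\MB$, and conjugating forward by $g$ shows $h$ is the corresponding isomorphism. The main thing to get right throughout is bookkeeping with the conjugation and the fact that automorphisms of $\MB$ preserve the induced-substructure relations; none of the three items presents a genuine mathematical obstacle, so the only real ``hard part'' is carefully extracting item \eqref{trivial Lemma 1} from the negation of the splitting definition, matching the two equivalent formulations of the complement condition.
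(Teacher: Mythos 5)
Your proposal is correct and follows essentially the same route as the paper, whose entire proof is the one-line remark that all three statements follow directly from Definition \ref{DefDoesNotSplit}; your write-up simply supplies the bookkeeping (negating the quantifiers in Definition \ref{splits} and realizing the resulting $\DB$ inside $\MB$ with $c$ in the prescribed position via ultrahomogeneity, shrinking hypotheses for item \eqref{trivial Lemma 2}, and conjugating by the automorphism for item \eqref{trivial Lemma 3}) that the paper leaves implicit.
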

\begin{proof}
All three statements follow directly from the definition \ref{DefDoesNotSplit}.
\end{proof}

\begin{lm}\label{permutationz}
Let $\KC$ be a \Fraisse{} class that does not split and let $\MB$ be the \Fraisse{} limit of $\KC$. Assume $F\subset M$ is  finite with $K\subset F$ so that $\KB$ controls the points $c_1,\ldots,c_n\in F^c$. Let $p_i$ be the rqf-type of $c_i$ over $\KB$. If $f:F\cup\{c_1,\ldots,c_n\}\to M$ is an injective map with so that $f\res F$ in an embedding of $\FB$ in $\MB$ and the rqf-type of $f(c_i)$ over $f(K)$ is $f[p_i]$. Then $f$ is also an embedding.
\end{lm}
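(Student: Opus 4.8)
The plan is to induct on $n$, adjoining the controlled points $c_1,\dots,c_n$ one at a time, and at each stage to use the hypothesis that $\KB$ controls the point being added to promote the available data into a genuine embedding. Since $\LC$ is relational, an injective map is an embedding exactly when it both preserves and reflects every relation on every tuple; so once $f$ is known to be an embedding on $F\cup\{c_1,\dots,c_{k-1}\}$, checking that it remains one after adjoining $c_k$ reduces to showing that $f(c_k)$ has, over $f(F\cup\{c_1,\dots,c_{k-1}\})$, precisely the rqf-type that $c_k$ has over $F\cup\{c_1,\dots,c_{k-1}\}$, transported by $f$.

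First I would normalize. Since $f\res F$ is an embedding between finite substructures of $\MB$, ultrahomogeneity extends it to some $\sigma\in\Aut(\MB)$, and replacing $f$ by $\sigma^{-1}\circ f$ alters neither the conclusion (as $\sigma$ is an automorphism) nor the hypotheses. After this reduction $f$ fixes $F$ pointwise, and the type condition becomes that each $c_k':=f(c_k)$ realizes $p_k$ over $\KB$, i.e.\ $c_k'$ and $c_k$ have the same rqf-type over $\KB$. I would then prove by induction on $k$ that $f$ restricted to $G_k:=F\cup\{c_1,\dots,c_k\}$ is an embedding, the base case $k=0$ being immediate since $f\res F=\mathrm{id}_F$.

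For the inductive step, write $G=G_{k-1}$ and let $\pi\in\Aut(\MB)$ extend the isomorphism $f\res G\colon \langle G\rangle_\MB\to\langle f(G)\rangle_\MB$ furnished by the inductive hypothesis. Put $d:=\pi^{-1}(f(c_k))$. Since $\pi$ fixes $K$ (it extends $f\res F=\mathrm{id}_F$) and $f(c_k)$ realizes $p_k$ over $K$, pulling back by $\pi^{-1}$ shows that $d$ realizes $p_k$ over $K$, the same type as $c_k$. Translating through $\pi$, proving that $f$ is an embedding on $G\cup\{c_k\}$ reduces exactly to proving that $c_k$ and $d$ have the same rqf-type over $\langle G\rangle_\MB$, i.e.\ that the map fixing $G$ pointwise and sending $c_k\mapsto d$ is an isomorphism $\langle G\cup\{c_k\}\rangle_\MB\to\langle G\cup\{d\}\rangle_\MB$.

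This last point is where ``$\KB$ controls $c_k$'' enters, and it is the main obstacle, because Definition \ref{DefDoesNotSplit} is phrased so as to fix the distinguished point and move the base, whereas here I must fix the base $G$ and move the point $c_k\mapsto d$. To bridge this I would exploit that $c_k$ and $d$ realize the same type $p_k$ over $K$: by ultrahomogeneity there is $\rho\in\Aut(\MB)$ with $\rho\res K=\mathrm{id}$ and $\rho(c_k)=d$. Setting $H:=\rho^{-1}(G)$ and $\theta:=(\rho^{-1}\res G)\cup\{c_k\mapsto c_k\}\colon G\cup\{c_k\}\to H\cup\{c_k\}$, the map $\theta$ fixes $K\cup\{c_k\}$ pointwise and restricts to an isomorphism $\langle G\rangle_\MB\to\langle H\rangle_\MB$, so applying Definition \ref{DefDoesNotSplit} (with $F_1=G$, $F_2=H$) yields that $\theta$ is an isomorphism $\langle G\cup\{c_k\}\rangle_\MB\to\langle H\cup\{c_k\}\rangle_\MB$. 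Composing with $\rho$ gives $\rho\circ\theta=\mathrm{id}_G\cup\{c_k\mapsto d\}$, an isomorphism onto $\langle G\cup\{d\}\rangle_\MB$, which is exactly the required equality of types; transporting back by $\pi$ then shows $f$ is an embedding on $G\cup\{c_k\}$. The only routine verifications I am deferring are that $K\subseteq H$, and that $c_k\notin H$ and $d\notin G$ so that these sets are honest ``base plus one point'' configurations, all of which follow from $\pi$ and $\rho$ fixing $K$ together with injectivity of $f$.
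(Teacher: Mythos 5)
Your proof is correct, and its skeleton is the same as the paper's: induct on the number of controlled points, adjoin them one at a time, and use ultrahomogeneity to conjugate the configuration into the format of Definition \ref{DefDoesNotSplit}. The difference is in how the one-point step is executed. The paper extends the partial isomorphism $f\res_{K\cup\{c_1\}}$ (controlling set \emph{plus} the new point) to an automorphism $g\in\Aut(\MB)$; then $h=g^{-1}\circ f$ fixes $K\cup\{c_1\}$ pointwise and moves the base $F$, which is exactly the orientation of Definition \ref{DefDoesNotSplit}, so control applies immediately and $f=g\circ h$ is an embedding. You instead extend $f$ on the base $G$ to an automorphism $\pi$, which lands you in the opposite orientation (base fixed, point moved from $c_k$ to $d$) --- the ``main obstacle'' you identify --- and you then repair this with a second automorphism $\rho$ fixing $K$ with $\rho(c_k)=d$. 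Your bridging argument is sound, and the verifications you defer (e.g.\ $d\notin G$, which follows from injectivity of $f$ since $c_k\notin G$) do go through. In effect, your inner step proves inline the statement of Lemma \ref{sameQftypeOverF} (two points with the same rqf-type over a controlling $\KB$ have the same rqf-type over every finite set avoiding them) directly from Definition \ref{DefDoesNotSplit}; the paper instead derives that lemma afterwards as a consequence of the present one. So there is no circularity in your argument, only a reversal of the paper's logical order, at the cost of one extra application of ultrahomogeneity per inductive step; what the paper's choice of automorphism buys is that the orientation mismatch never arises.
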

\begin{proof}
We will prove this by induction. For $n=1$, let $g\in\Aut{}(\MB)$ with $g\res_{K\cup\{c_1\}}=f\res{}_{K\cup\{c_1\}}$. The map $h:F\cup\{c_1\}\to g^{-1}f\big(F\cup\{c_1\}\big)$ is an injection fixing $K\cup\{c_1\}$ with $h\res F$ being an isomorphism. Since $\KB$ controls $c_1$, the map $h$ (and therefore the map $f$) is an isomorphism.

Assume now that the statement holds for every $n$ with $n\leq k$ and let $f:F\cup\{c_1,\ldots,c_k,c_{k+1}\}\to M$. By the inductive hypothesis we can enlarge $F$ to include $\{c_1,\ldots,c_k\}$, reducing the problem again to the $n=1$ case. 
\end{proof}

 For $a,b\in M$, we say that $a$ is \emph{equivalent} to $b$ and we write $a\sim_{\MB} b$, if there is a finite $K\subset M$ such that $\KB$ controls $a$ and $a,b$ have the same rqf-type over $\KB$. Notice then, that as a consequence of \ref{trivial Lemma}(\ref{trivial Lemma 3}) $\KB$ controls $b$ too.

\begin{lm}\label{sameQftypeOverF}
Let $\KC$ be a \Fraisse{} class that does not split and let $\MB$ be the \Fraisse{} limit of $\KC$. Let also $a,b\in M$ with $a\sim_{\MB} b$. Then, for every finite subset $F$ of $M$ with $a,b\not\in F$, the points $a$ and $b$ share the same rqf-type over $\FB$.
\end{lm}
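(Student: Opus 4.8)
The plan is to reduce to the case $K\subseteq F$ and then transport everything through a single automorphism. Since $a\sim_{\MB}b$, fix a finite $K\subset M$ with $c\notin K$ such that $\KB$ controls $a$ and $a,b$ realize the same rqf-type over $\KB$; as recorded right after the definition of $\sim_{\MB}$, the class $\KB$ then controls $b$ as well, and in particular $a,b\notin K$. Because the rqf-type over $\FB$ is just the restriction to $\LC_F$-formulas of the rqf-type over any larger parameter set, it suffices to prove the claim with $F'=F\cup K$ in place of $F$: if $a,b$ share the same rqf-type over $\FB'$, they share it over $\FB$. Note that $a,b\notin F'$ and $K\subseteq F'$. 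Finally, since $a$ and $b$ have the same rqf-type over $\KB$ and $\LC$ is relational, the map fixing $K$ pointwise and sending $a\mapsto b$ is an isomorphism of $\langle K\cup\{a\}\rangle_{\MB}$ onto $\langle K\cup\{b\}\rangle_{\MB}$, so by ultrahomogeneity it extends to some $g\in\Aut(\MB)$ with $g\res_K=\mathrm{id}$ and $g(a)=b$.

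The goal is now to show that the bijection $f_0\colon F'\cup\{a\}\to F'\cup\{b\}$ with $f_0\res_{F'}=\mathrm{id}$ and $f_0(a)=b$ is an isomorphism, since this is exactly the assertion that $a$ and $b$ have the same rqf-type over $\FB'$. I would factor $f_0=h\circ(g\res_{F'\cup\{a\}})$, where $g\res_{F'\cup\{a\}}$ is the isomorphism of $\langle F'\cup\{a\}\rangle_{\MB}$ onto $\langle g(F')\cup\{b\}\rangle_{\MB}$, and $h\colon g(F')\cup\{b\}\to F'\cup\{b\}$ is defined by $h\res_{g(F')}=g^{-1}\res_{g(F')}$ and $h(b)=b$. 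A direct check confirms $f_0=h\circ g$ on $F'\cup\{a\}$: on $F'$ we get $x\mapsto g(x)\mapsto x$, and $a\mapsto b\mapsto b$. Thus it remains only to prove that $h$ is an isomorphism.

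This is where Definition \ref{DefDoesNotSplit} enters. I would apply the fact that $\KB$ controls $b$, taking $F_1=g(F')$, $F_2=F'$ and $f=h$. The hypotheses all hold: $K\subseteq F'$ and $K=g(K)\subseteq g(F')$; the map $h$ fixes $K\cup\{b\}$ pointwise, since $g^{-1}\res_K=\mathrm{id}$ and $h(b)=b$; and $h\res_{g(F')}=g^{-1}\res_{g(F')}$ is an isomorphism of $\langle g(F')\rangle_{\MB}$ onto $\langle F'\rangle_{\MB}$, being a restriction of $g^{-1}$. One also checks the implicit side conditions $b\notin F'$ and $b\notin g(F')$, the latter because $a\notin F'$ and $g(a)=b$. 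Control of $b$ then yields that $h$ is an isomorphism of $\langle g(F')\cup\{b\}\rangle_{\MB}$ onto $\langle F'\cup\{b\}\rangle_{\MB}$, whence $f_0=h\circ(g\res_{F'\cup\{a\}})$ is an isomorphism, completing the proof.

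The conceptual content lies entirely in the factorization through $g$: the automorphism carries $a$ to $b$ but displaces $F'$ to $g(F')$, and the control property is precisely the tool that corrects this displacement back to the identity on $F'$ while keeping the moved point $b$ fixed. The one point demanding care, and the likeliest source of a slip, is the bookkeeping of which point is held fixed when invoking Definition \ref{DefDoesNotSplit}: control must be applied to $b$ rather than $a$, because the correcting map $h$ fixes $b$ and permutes the surrounding finite set, which is the exact shape the definition requires. Beyond verifying these side conditions I do not expect any genuine obstacle.
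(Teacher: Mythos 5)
Your proof is correct. The paper disposes of this lemma in one line, by applying Lemma \ref{permutationz} to the map that fixes $F$ and exchanges $a$ with $b$; you instead bypass Lemma \ref{permutationz} and argue directly from Definition \ref{DefDoesNotSplit}. The two routes share the same mathematical core: your maneuver --- extend the partial isomorphism on $K\cup\{a\}$ to some $g\in\Aut(\MB)$ by ultrahomogeneity, then use control of $b$ to show that the correcting bijection $h=f_0\circ g^{-1}$, which fixes $K\cup\{b\}$ pointwise, is an isomorphism --- is precisely the paper's proof of the $n=1$ case of Lemma \ref{permutationz}, with the factorization written on the other side (there the map is $g^{-1}\circ f$ and control is applied to $c_1$). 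So you have re-proved, rather than cited, the special case of that lemma that you need. What your version buys: the explicit replacement of $F$ by $F'=F\cup K$ makes visible a step that the paper's one-liner leaves implicit, since Lemma \ref{permutationz} requires $K\subset F$ while the $K$ witnessing $a\sim_{\MB}b$ need not lie in the given $F$; likewise your check that $b\notin g(F')$, so that $h$ is well defined, is the kind of detail the terse original suppresses. Two cosmetic slips: ``$c\notin K$'' in your first sentence should read ``$a\notin K$'', and the claim that $b\notin K$ deserves its half-line justification --- the rqf-type of $a$ over $\KB$ contains $\neg(y=c_x)$ for each $x\in K$, and $b$ realizes the same type.
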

\begin{proof}
Let $f:F\cup\{a,b\}\to F\cup\{a,b\}$ be the function that fixes $F$ and exchanges $a$ with $b$ and use Lemma \ref{permutationz}.
\end{proof}

In the additional presence of $\mathrm{SAP}$ we now have the following results. 

\begin{lm}\label{LemmaCommon L}
Let $\KC$ be a \Fraisse{} class with $\mathrm{SAP}$ that does not split and let $\MB$ be the \Fraisse{} limit of $\KC$.
Let also $c_1,\ldots c_n\in M$. Then there is $K\subset M$ so that $\KB$ controls $c_i$ for every $i\in\{1,\ldots,n\}$.
\end{lm}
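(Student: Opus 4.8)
The plan is to reduce to the single-point case already settled in Lemma~\ref{trivial Lemma}(\ref{trivial Lemma 1}) and then take a union of controlling sets. The only genuine difficulty is that, by Definition~\ref{DefDoesNotSplit}, a set controlling $c_i$ must not contain $c_i$ and may accidentally contain some of the other points $c_j$; a naive union of controlling sets could therefore contain one of the $c_i$'s and fail to control it. I will circumvent this by first moving each controlling set off the finite set $P=\{c_1,\dots,c_n\}$ while fixing the point it controls. To begin, for each $i$ apply Lemma~\ref{trivial Lemma}(\ref{trivial Lemma 1}) to obtain a finite $K_i\subset M$ with $c_i\notin K_i$ such that $\KB_i$ controls $c_i$. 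Since $c_i\notin K_i$, the set $K_i$ can meet $P$ only in points $c_j$ with $j\neq i$.

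The technical heart is to produce, for each $i$, an automorphism $f_i\in\Aut(\MB)$ with $f_i(c_i)=c_i$ and $f_i(K_i)\cap P=\emptyset$. To build $f_i$, enumerate $K_i=\{k_1,\dots,k_m\}$ and realize the rqf-type of the tuple $(k_1,\dots,k_m)$ over $c_i$ by fresh points avoiding $P$, one coordinate at a time: having chosen $k_1',\dots,k_{j-1}'$, the transported rqf-type of $k_j$ over $\{c_i,k_1',\dots,k_{j-1}'\}$ is non-trivial, as $k_j$ differs from $c_i$ and from $k_1,\dots,k_{j-1}$, so by the absence of algebraicity it has infinitely many realizations in $M$ and we may choose $k_j'$ outside the finite set $P\cup\{c_i,k_1',\dots,k_{j-1}'\}$. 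The resulting map $c_i\mapsto c_i,\ k_l\mapsto k_l'$ is then a finite partial isomorphism of $\MB$, and by ultrahomogeneity it extends to the desired $f_i\in\Aut(\MB)$. Setting $K_i'=f_i(K_i)$, Lemma~\ref{trivial Lemma}(\ref{trivial Lemma 3}) shows that $\langle f_i(K_i)\rangle_{\MB}$ controls $f_i(c_i)=c_i$, while by construction $K_i'\cap P=\emptyset$, so in particular $c_i\notin K_i'$.

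Finally I put $K=\bigcup_{i=1}^n K_i'$. Then $K$ is finite and $K\cap P=\emptyset$, hence $c_i\notin K$ for every $i$; since $K_i'\subset K$ and $\KB_i'$ controls $c_i$, Lemma~\ref{trivial Lemma}(\ref{trivial Lemma 2}) yields that $\KB$ controls $c_i$ for every $i$, as required. I expect the construction of the $f_i$ to be the main obstacle: one must verify that it is exactly the no-algebraicity hypothesis (equivalently $\mathrm{SAP}$) that licenses realizing the whole configuration $K_i$ over the fixed point $c_i$ by new points disjoint from the prescribed finite set $P$, and that proceeding one point at a time keeps every intermediate rqf-type non-trivial so that the hypothesis indeed applies at each stage.
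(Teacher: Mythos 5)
Your proposal is correct and follows essentially the same route as the paper: obtain a controlling set $K_i$ for each $c_i$ via Lemma~\ref{trivial Lemma}(\ref{trivial Lemma 1}), use no algebraicity (equivalently $\mathrm{SAP}$) together with ultrahomogeneity and Lemma~\ref{trivial Lemma}(\ref{trivial Lemma 3}) to replace each $K_i$ by an automorphic image disjoint from $\{c_1,\dots,c_n\}$, and then take the union, invoking Lemma~\ref{trivial Lemma}(\ref{trivial Lemma 2}). The paper compresses the automorphism construction into one sentence, whereas you spell it out point by point, but the underlying argument is the same.
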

\begin{proof}
From Lemma \ref{trivial Lemma}(\ref{trivial Lemma 1}) let $K_i\subset M$ so that $\KB_i$ that controls $c_i$. Using SAP and \ref{trivial Lemma}(\ref{trivial Lemma 3}), we can arrange $K_1,\ldots,K_n$ in such a way that $K_1,\ldots,K_n,\{a,b,c\}$ are all pairwise disjoint. Let $K=\bigcup_{i=1}^n K_i$
\end{proof}

\begin{coro}\label{lemmaEquivalence}
Let $\KC$ be a \Fraisse{} class with $\mathrm{SAP}$ that does not split and let $\MB$ be the \Fraisse{} limit of $\KC$.
Then, $\sim_{\MB}$ is  an equivalence relation on $M$.
\end{coro}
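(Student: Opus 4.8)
The plan is to verify the three defining properties of an equivalence relation directly from the definition of $\sim_{\MB}$, treating reflexivity and symmetry as essentially immediate and concentrating the real work on transitivity.

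For reflexivity, given $a\in M$ I would invoke Lemma \ref{trivial Lemma}(\ref{trivial Lemma 1}) to produce a finite $K\subset M$ with $a\notin K$ such that $\KB$ controls $a$; since $a$ trivially has the same rqf-type over $\KB$ as itself, this $K$ witnesses $a\sim_{\MB}a$. For symmetry, suppose $a\sim_{\MB}b$ is witnessed by a finite $K$ with $\KB$ controlling $a$ and with $a,b$ sharing the same rqf-type over $\KB$. The remark preceding Lemma \ref{sameQftypeOverF} (a consequence of Lemma \ref{trivial Lemma}(\ref{trivial Lemma 3})) already records that $\KB$ controls $b$ as well, and ``having the same rqf-type over $\KB$'' is manifestly a symmetric relation, so the very same $K$ witnesses $b\sim_{\MB}a$.

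The main obstacle is transitivity, where the witnessing sets for $a\sim_{\MB}b$ and for $b\sim_{\MB}c$ need not coincide and the rqf-types are recorded over different finite parameter sets, so the two agreements cannot be chained as they stand. To align them I would first apply Lemma \ref{LemmaCommon L} --- this is precisely where $\mathrm{SAP}$ enters --- to obtain a single finite $K\subset M$ that simultaneously controls $a$, $b$, and $c$; by the definition of ``controls,'' this automatically gives $a,b,c\notin K$. I would then transfer both hypotheses to this common parameter set using Lemma \ref{sameQftypeOverF}: since $a\sim_{\MB}b$ with $a,b\notin K$, the points $a$ and $b$ share the same rqf-type over $\KB$, and likewise, since $b\sim_{\MB}c$ with $b,c\notin K$, the points $b$ and $c$ share the same rqf-type over $\KB$.

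With both agreements now expressed over the one set $\KB$, the conclusion follows by chaining rqf-types: $a$ and $c$ share the same rqf-type over $\KB$, while $\KB$ controls $a$, so $K$ witnesses $a\sim_{\MB}c$. The only point requiring care is confirming that the hypotheses of Lemma \ref{sameQftypeOverF} are genuinely satisfied --- namely that the common $K$ avoids the relevant points --- but this is exactly what the definition of ``controls'' forces, so that the application of Lemma \ref{LemmaCommon L} delivers $a,b,c\notin K$ for free and no further verification is needed.
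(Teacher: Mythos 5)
Your proof is correct and takes essentially the same route as the paper: reflexivity and symmetry read off directly from the definition of $\sim_{\MB}$ (using Lemma \ref{trivial Lemma}), and transitivity obtained by combining Lemma \ref{LemmaCommon L} (where $\mathrm{SAP}$ supplies a single finite $\KB$ controlling $a$, $b$, $c$) with Lemma \ref{sameQftypeOverF} to chain the type agreements over that common $\KB$. The paper's proof is just a terse citation of these same two lemmas, so your write-up merely supplies the details it leaves implicit.
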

\begin{proof}
Reflexivity and symmetry follow directly from the definition. Transitivity, follows from Lemma \ref{LemmaCommon L} and Lemma \ref{sameQftypeOverF}.
\end{proof}

\begin{lm}\label{theotherdirection}
Let $\KC$ be a \Fraisse{} class with $\mathrm{SAP}$ that does not split and let $\boldsymbol{M}$ be the \Fraisse{} limit of $\KC$. Let also $A,B\subset M$ such that all $\AB,\AB^c,$ and $\BB,\BB^c$, absorb points. Then, every $g$ in $\mathrm{Iso}(\boldsymbol{A},\boldsymbol{B})$  can be extended to an automorphism $\tilde{g}\in \mathrm{Aut}(\boldsymbol{M})$.
 \end{lm}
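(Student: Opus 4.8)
The plan is to build $\tilde g$ by a back-and-forth construction starting from $g$, producing an increasing chain of partial isomorphisms $g=g_0\subseteq g_1\subseteq\cdots$ of $\MB$, each with $A\subseteq\mathrm{dom}(g_n)$, $B\subseteq\mathrm{range}(g_n)$, and with $\mathrm{dom}(g_n)\stm A$, $\mathrm{range}(g_n)\stm B$ finite. Then $\tilde g=\bigcup_n g_n$ is a bijection of $M$ which is a partial isomorphism on every finite set, hence an automorphism extending $g$. Fixing an enumeration of $M$, odd steps force a prescribed point into the domain and even steps force it into the range; the two are mirror images, so it suffices to describe the \emph{domain step}: given $g_n$ and $c\in M\stm\mathrm{dom}(g_n)$, extend $g_n$ to $g_{n+1}$ with $c\in\mathrm{dom}(g_{n+1})$.

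The whole difficulty is concentrated in choosing a correct image $d$ for $c$ and checking that $g_n\cup\{(c,d)\}$ is still a partial isomorphism; this is where ``does not split'' enters, through the controlling sets of Definition~\ref{DefDoesNotSplit} and through Lemma~\ref{permutationz}. To run the step cleanly one wants $c$ to be controlled by a finite set lying inside $A=\mathrm{dom}(g)$, which is contained in every $\mathrm{dom}(g_n)$. I therefore isolate the following as the key point: \emph{for every $c\in M$ there is a finite $K\subseteq A\stm\{c\}$ with $\KB$ controlling $c$} (and symmetrically a controlling set inside $B$). To prove it, take any controlling $K_0=\{k_1,\dots,k_r\}$ supplied by Lemma~\ref{trivial Lemma}(\ref{trivial Lemma 1}); since $\AB$ absorbs points, inductively realize inside $A$ the rqf-type of $k_i$ over $\{c,k_1,\dots,k_{i-1}\}$ (each such type is non-trivial because $c\notin K_0$), obtaining $K=\{k_1',\dots,k_r'\}\subseteq A$ with $\langle K\cup\{c\}\rangle_\MB\cong\langle K_0\cup\{c\}\rangle_\MB$ via the map fixing $c$ and sending $k_i'\mapsto k_i$. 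Extend this isomorphism to some $\Psi\in\Aut(\MB)$ by ultrahomogeneity and transport control along $\Psi^{-1}$ via Lemma~\ref{trivial Lemma}(\ref{trivial Lemma 3}) to conclude that $\KB$ controls $c$.

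Granting this, the domain step is as follows. Pick $K\subseteq A$ controlling $c$, let $p$ be the rqf-type of $c$ over $\KB$, and note that $g(K)\subseteq B$ and that $g[p]$ is a non-trivial rqf-type over $\langle g(K)\rangle_\MB$. Because $\BB^c$ absorbs points and, by the absence of algebraicity, every rqf-type has infinitely many realizations, I can extend $g[p]$ to a non-trivial rqf-type over $\langle g(K)\cup(\mathrm{range}(g_n)\stm B)\rangle_\MB$ that separates the variable from the finitely many extra range points, and absorb a realization $d\in B^c$; then $d\notin\mathrm{range}(g_n)$ and $d$ realizes $g[p]$ over $g(K)$. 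Set $g_{n+1}=g_n\cup\{(c,d)\}$. To see this is a partial isomorphism it suffices to check each finite restriction: for finite $F_0\subseteq\mathrm{dom}(g_n)$ with $K\subseteq F_0$, the map $(g_n\res F_0)\cup\{(c,d)\}$ has $g_n\res F_0$ an embedding and sends $c$ to a point of rqf-type $g[p]=f[p]$ over $g(K)$, so Lemma~\ref{permutationz} (with a single controlled point) makes it an embedding. The range step is the mirror image, using a controlling set inside $B$ and the absorption of $\AB^c$ to produce a preimage. The union $\tilde g=\bigcup_n g_n$ is then the desired automorphism extending $g$.

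The main obstacle is the key point of the second paragraph: localizing a controlling set for an \emph{arbitrary} point inside the fixed subset $A$ (resp. $B$). Without it the naive back-and-forth cascades, since adding $c$ forces one to add the points of its controlling set, whose own controlling sets need not yet have been placed; absorption together with ultrahomogeneity and Lemma~\ref{trivial Lemma}(\ref{trivial Lemma 3}) is precisely what breaks this cascade. Everything afterwards is the routine finite-restriction verification powered by Lemma~\ref{permutationz} and the absorption of $\AB^c$ and $\BB^c$.
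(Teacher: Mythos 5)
Your proof is correct, and it reaches the conclusion by a route organized differently from the paper's. The two arguments share their technical pillars: your ``key point'' --- transporting a controlling set for $c$ into $A$ (resp.\ $B$) via absorption, ultrahomogeneity and Lemma \ref{trivial Lemma}(\ref{trivial Lemma 3}) --- appears in compressed form in the paper (``Using Lemma \ref{trivial Lemma}(\ref{trivial Lemma 3}) and the fact that $\AB$ absorbs points we can assume that $K\subset A$''), and both proofs verify that candidate extensions are embeddings through Lemma \ref{permutationz}. Where you diverge is the architecture: you build $\tilde g$ one point at a time by back-and-forth, producing each new image (resp.\ preimage) with the absorption of $\BB^c$ (resp.\ $\AB^c$) and non-triviality of types to dodge the finitely many points already used, so you never need the relation $\sim_{\MB}$ at all, and in particular you bypass Lemmas \ref{LemmaCommon L}, \ref{sameQftypeOverF} and Corollary \ref{lemmaEquivalence}; SAP enters your argument only through ``no algebraicity gives infinitely many realizations.'' The paper instead defines $\tilde g$ in one shot: it partitions $M$ into the equivalence classes $\{N_i\}_{i\in I}$ of $\sim_{\MB}$ (this is where the paper uses SAP, to know $\sim_{\MB}$ is an equivalence relation), observes via Lemma \ref{sameQftypeOverF} that a single $\KB_i\subseteq A$ controls every point of $N_i\cap A^c$, and then sends $N_i\cap A^c$ onto $N_{gi}\cap B^c$ by an \emph{arbitrary} bijection $h_i$, where $N_{gi}$ is the class containing $g(a_i)$ for a representative $a_i\in N_i\cap A$. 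Your route is more elementary and self-contained, at the price of inductive bookkeeping; the paper's route buys structural information --- it shows that \emph{any} class-respecting bijection yields an extension, and this same class decomposition is what Chapter \ref{ChapterStructuralConsequences} reuses to identify $\Aut(\MB)$ with the wreath product $\big(\prod_{i\in I}\mathrm{S}_\infty\big)\rtimes H$ in Theorem \ref{theorem_Wreath_Product}.
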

\begin{proof}
Let $\{N_i: i\in I\}$ be an enumeration of all equivalence classes of $\sim_{\MB}$ in  $M$ and notice that since all $\AB,\AB^c,\BB,\BB^c$ absorb points the sets $N_i\cap A,N_i\cap A^c,N_i\cap B,N_i\cap B^c$ are all infinite for every $i\in I$.

Let now $c\in A^c$ and pick  $K\subset M$ so that $\KB$ controls $c$. Using Lemma \ref{trivial Lemma}(\ref{trivial Lemma 3}) and the fact that $\AB$ absorbs points we can assume that $K\subset A$. Notice then, that by Lemma \ref{sameQftypeOverF} $\KB$ controls every other point $c'\in M$ with $c\sim_{\MB}c'$ and $c'\not\in K$. In particular, if $c\in N_i$ for some $i\in I$, $\KB$ controls every point $c'\in N_i\cap A^c$ as well as cofinitely many points $c'\in N_i\cap A$. So, for every $i\in I$ we can pick a finite subset $K_i$ of $A$ and some $a_i\in N_i\cap A$ so that $\KB_i$ controls $a_i$ as well as every $c\in N_i\cap A^c$.   

Given now any $g\in\mathrm{Iso}(\boldsymbol{A},\boldsymbol{B})$ we have by Lemma \ref{trivial Lemma}(\ref{trivial Lemma 3}) that $\langle g(K_i)\rangle_{\MB}$ controls $g(a_i)$. For every $i\in I$, let $gi$ be the unique $j\in I$ with $g(a_i)\in N_j$ and pick a bijection $h_i:N_i\cap A^c\to N_{gi}\cap B^c$. We extend $g$ to an automorphism $\tilde{g}\in \mathrm{Aut}(\boldsymbol{M})$ setting $\tilde{g}(c)=h_i(c)$ whenever $c\in A^c$ with $c\in N_i$. To see that $\tilde{g}$ is indeed an automorphism, notice that by Lemma \ref{permutationz} the restriction of $\tilde{g}$ to any finite substructure is a partial isomorphism.
\end{proof}

 \begin{theorem}\label{theorem}
Let $\boldsymbol{M}$ be a \Fraisse{} structure that has no algebraicity and let $\mathcal{K}$ be the corresponding \Fraisse{} class. 
\begin{enumerate}
\item If $\mathcal{K}$ splits, then the generic substructure $\AB$ of $\MB$ is a strongly non-global substructure and the generic pair $\AB,\BB$ in $\MB$ is a strongly non-global pair.
\item If $\mathcal{K}$ does not split, then  the generic substructure $\AB$ of $\MB$  is a global substructure and the generic pair $\AB,\BB$ in $\MB$  is a global pair.
\end{enumerate} 
 \end{theorem}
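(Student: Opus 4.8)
The plan is to assemble the theorem from the three lemmas already established, after recording that ``$\MB$ has no algebraicity'' is exactly the hypothesis $\mathrm{SAP}$ used in those lemmas (the equivalence noted before Example~\ref{example}). Throughout I will identify $\Iso(\AB)$ with $\Iso(\AB,\AB)$ and $\mathcal{E}(\AB)$ with $\mathcal{E}(\AB,\AB)$, so that the single-substructure assertions are the special case $B=A$ of the pair assertions; this lets me handle both halves of each clause at once, the only point of care being that setting $B=A$ turns the hypotheses on $\BB,\BB^c$ in Lemmas~\ref{not extend} and~\ref{theotherdirection} into (a copy of) the hypotheses on $\AB,\AB^c$.

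First I would isolate the common generic hypothesis. By Lemma~\ref{pair} the set of $A\in 2^M$ for which both $\AB$ and $\AB^c$ absorb points is comeager, and since a finite product of comeager sets is comeager, the set of pairs $(A,B)\in 2^M\times 2^M$ for which all four of $\AB,\AB^c,\BB,\BB^c$ absorb points is comeager in $2^M\times 2^M$. Recall moreover that a structure which absorbs points is isomorphic to $\MB$, so on these comeager sets $\Iso(\AB)$ and $\Iso(\AB,\BB)$ are nonempty, and the notions ``global'' and ``strongly non-global'' genuinely make sense.

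For clause (1), assume $\mathcal{K}$ splits. On the comeager set of $A$ with $\AB,\AB^c$ absorbing points, Lemma~\ref{not extend} applied with $B=A$ yields that $\mathcal{E}(\AB)$ is meager in $\Iso(\AB)$, i.e.\ $\AB$ is strongly non-global; the strongly non-global substructures thus contain a comeager set and so form a comeager set, which is precisely the assertion that the generic substructure is strongly non-global. Running the same argument for pairs on the comeager set of $(A,B)$ with all four absorbing points, Lemma~\ref{not extend} gives directly that $\mathcal{E}(\AB,\BB)$ is meager in $\Iso(\AB,\BB)$, whence the generic pair is strongly non-global.

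For clause (2), assume $\mathcal{K}$ does not split. On the same comeager sets, Lemma~\ref{theotherdirection} shows that every $g\in\Iso(\AB,\BB)$ extends to an automorphism of $\MB$, that is $\mathcal{E}(\AB,\BB)=\Iso(\AB,\BB)$; specializing to $B=A$ gives $\mathcal{E}(\AB)=\Iso(\AB)$. Hence the generic substructure is global and the generic pair is global. Since every clause is now a direct bookkeeping of the cited lemmas together with the stability of comeagerness under supersets and finite products, I expect no genuine obstacle; the only thing to verify is the legitimacy of the $B=A$ specialization flagged above, which is immediate.
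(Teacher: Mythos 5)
Your proof is correct and follows essentially the same route as the paper's: the paper likewise deduces the theorem by combining Lemma~\ref{pair} (plus the fact that products of comeager sets are comeager, for the pair case) with Lemma~\ref{not extend} when $\mathcal{K}$ splits and Lemma~\ref{theotherdirection} when it does not, with the single-substructure assertions obtained exactly by the $B=A$ specialization you flag. Your write-up merely makes explicit two points the paper leaves implicit, namely the translation ``no algebraicity $\Leftrightarrow$ SAP'' and the legitimacy of setting $B=A$, both of which are handled correctly.
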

\begin{proof}
We have from Lemma \ref{pair} that for a generic substructure $\AB$ of $\MB$, both $\AB,\AB^c$ absorb points and that for a generic pair $\AB,\BB$ in $\MB$, all $\AB,\AB^c,\BB,\BB^c$ absorb points. The result follows from Lemma \ref{not extend} in case that $\KC$ splits and from Lemma \ref{theotherdirection} in case that $\KC$ does not split.   
\end{proof}

\section{Some structural consequences}\label{ChapterStructuralConsequences}

Let $\KC$ be a \Fraisse{} class with SAP, and let $\MB$ be the corresponding \Fraisse{} limit. If $A$ is a subset of $M$, we denote with $S^{\MB}_n(A)$ the Stone space of all (complete) $n$-types over $A$. For every finite subset $C$ of $M$ we denote by $\mathrm{tp}(C|A)$ the type of $C$ over $A$. Let now  $C_0\subset M$ so that $\CB_0$ splits $\KC$ and let $n$ be the size of $C_0$. The fact that $\CB_0\in\KC$ splits $\KC$ can be rephrased as follows: for every finite $K\subset M$ with $K\cap C_0=\emptyset$ there is a finite $F\supset K$ and a second copy $\CB_1$ of $\CB_0$ in $\MB$ such that $F\cap C_0=\emptyset$, $F\cap C_1=\emptyset$ and $\mathrm{tp}(C_0|F)\neq\mathrm{tp}(C_1|F)$. Iterating this fact we produce a Cantor schema in the compact metric space $S^{\MB}_n(M)$ which results to an embedding of the Cantor set $2^\N$ into $S^{\MB}_n(M)$. We have just proved the following proposition.
 \begin{propo} \label{theorem omega stable--> does not split }
 Let $\KC$ be a \Fraisse{} class with SAP. If the corresponding \Fraisse{} limit $\MB$ is $\omega$-stable, then $\KC$ does not split.
\end{propo}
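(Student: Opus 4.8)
The plan is to prove the contrapositive: assuming that $\KC$ splits, I will show that the \Fraisse{} limit $\MB$ realizes $2^{\aleph_0}$ distinct complete $n$-types over the countable set $M$, which is incompatible with $\omega$-stability. To set up, fix $\CB_0\in\KC$ that splits $\KC$, realize it inside $\MB$ as a tuple $C_0$, and let $n=|C_0|$. Since $\MB$ is ultrahomogeneous in a relational language, complete types over finite sets coincide with realized quantifier free types, so $S^{\MB}_n(M)$ is the object in which I will manufacture a Cantor scheme.

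The first real task is to translate Definition \ref{splits} into a statement about types over a finite set, as in the paragraph preceding the statement. Given a finite $K\subset M$ disjoint from $C_0$ together with a realized qf-type $p$ of $C_0$ over $\KB$, set $\DB=\langle K\cup C_0\rangle_{\MB}$ and let $i:\CB_0\to\DB$ be the inclusion. Splitting produces $\DB_1,\DB_2\in\KC$, embeddings $j_1,j_2$ of $\DB$, and a bijection $f:D_1\to D_2$ which is an isomorphism off the copy of $C_0$ but not globally. Realizing the common base $\langle D_1\setminus C\rangle_{\DB_1}\cong\langle D_2\setminus C\rangle_{\DB_2}$ once inside $\MB$ as a finite set $F\supseteq K$, and then realizing over $F$ the two prescribed extensions, yields two copies $C_0,C_1$ of $\CB_0$ that agree with $p$ over $K$ but satisfy $\mathrm{tp}(C_0|F)\neq\mathrm{tp}(C_1|F)$. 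The inequality is forced precisely because $f$ is an isomorphism on the complement of $C$ yet fails to be one globally, so the discrepancy is recorded by a quantifier free $\mathcal{L}_F$-formula that separates the two types. In type language this is a branching rule: any realized finite type of $C_0$ over $K$ admits two distinct, mutually contradictory extensions over a larger finite set $F$, each again realized by a copy of $\CB_0$.

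With the branching rule in hand I would iterate it along the full binary tree $2^{<\N}$ to build a Cantor scheme. Recursively assign to each $s\in 2^{<\N}$ a finite $F_s\subset M$ and a realized finite qf-type $p_s$ of a copy of $\CB_0$ over $F_s$, with $F_s\subseteq F_{s'}$ and $p_{s'}\supseteq p_s$ whenever $s\subseteq s'$; at each node apply the branching rule with $K=F_s$ to obtain $F\supseteq F_s$ and incompatible extensions $p_{s0},p_{s1}$ over $F_{s0}=F_{s1}:=F$, separated by a formula over $F$. For a branch $x\in 2^\N$ the union $p_x=\bigcup_n p_{x\res n}$ is a consistent qf-type over $\bigcup_n F_{x\res n}$, hence extends to some $\bar p_x\in S^{\MB}_n(M)$. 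If $x\neq y$ and $s$ is their longest common prefix then $p_x\supseteq p_{s0}$ and $p_y\supseteq p_{s1}$ disagree on a formula over the finite set $F_{s0}\subseteq M$, so $\bar p_x\neq\bar p_y$; thus $x\mapsto\bar p_x$ is an injective continuous map of $2^\N$ into $S^{\MB}_n(M)$, giving $|S^{\MB}_n(M)|=2^{\aleph_0}$. As $M$ is countable, this contradicts $\omega$-stability and completes the contrapositive.

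I expect the main obstacle to be the second step: the faithful passage from the purely combinatorial Definition \ref{splits} to an honest inequality of types over a finite set, realized inside $\MB$. Making the two copies $C_0,C_1$ land in $\MB$ over a common finite base $F\supseteq K$ and realize exactly the prescribed rqf-types is where universality and homogeneity of $\MB$ are used, while $\mathrm{SAP}$ (equivalently, the absence of algebraicity) is what lets one place the required copies disjointly and absorb the prescribed finite $K$ into the base $F$, so that the iteration keeps every separating formula over a finite subset of $M$. Once this bookkeeping is pinned down, the Cantor scheme and the cardinality count are routine.
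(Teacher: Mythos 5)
Your proposal is correct and takes essentially the same route as the paper: the paper also argues by rephrasing the splitting of $\CB_0$ as the statement that for every finite $K\subset M$ disjoint from $C_0$ there are a finite $F\supset K$ and a second copy $\CB_1$ of $\CB_0$ with $\mathrm{tp}(C_0|F)\neq\mathrm{tp}(C_1|F)$, and then iterates this to produce a Cantor scheme embedding $2^\N$ into $S^{\MB}_n(M)$, contradicting $\omega$-stability. Your write-up simply makes explicit the branching rule and the bookkeeping that the paper leaves terse.
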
  
Someone would hope that the above result could turn into a characterization of $\omega$-stability for \Fraisse{} limits without algebraicity. This however is not the case as the following example exhibits.
\begin{example}\label{Example not omega stable}
Let $\LC=\{R,S\}$ where $R,S$ are both binary relational symbols and let $M$ be the disjoint union of a countable family of countable sets $\{N_i : i\in\N\}$. We define $\MB$ to be an $\LC$-structure with domain $M$, where the symbols of $\LC$ are interpreted as follows. For $S$, let 
\[\MB\models S(a,b)\Leftrightarrow \exists i\in\N\,\,\, a,b\in N_i\]
To interpret $R$, equip first the set of indeces $\{i: i\in\N\}$ with a structure $\mathbb{G}$ isomorphic to the random graph in the language $\LC=\{R'\}$ of one binary symbol. Let
\[\MB\models R(a,b)\quad\Leftrightarrow\quad a\in N_i,\,\,\,b\in N_j\,\,\,\text{and}\,\,\,\mathbb{G}\models R'(i,j)\] 
It is not difficult to see that $\MB$ is a \Fraisse{} limit without algebraicity. For every $a\in M$, $a$ is controlled  by $\langle b\rangle_{\MB}$ for any $b\neq a$ that lies in the same $N_i$ with $a$. From that if follows that the corresponding \Fraisse{} class $\KC$ does not split. Notice however, that $\MB$ is not $\omega$-stable because the structure $\mathbb{G}$ of the random graph is not $\omega$-stable.
\end{example}   

We will now see that Example \ref{Example not omega stable} is an archetype of how \Fraisse{} limits of classes $\KC$ which have $\mathrm{SAP}$ and do not split look like. Recall that in Chapter \ref{ChapterSAP} we defined a relation $\sim_{\MB}$ between points a \Fraisse{} limit $\MB$ whose \Fraisse{} class $\KC$ does not split. If moreover $\KC$ has SAP, we proved in Lemma \ref{lemmaEquivalence} that $\sim_{\MB}$ is an equivalence relation on $M$. Another useful observation is that $\sim_{\MB}$ is $\Aut{}(\MB)$-invariant. This follows directly from Lemma \ref{trivial Lemma}(\ref{trivial Lemma 3}).

Let $\{N_i : i\in I\}$ be the partition of $M$ into  the equivalence classes of $\sim_{\MB}$, where $I$ is a countable, possibly finite set of indices . From the fact that $\KC$ has SAP it is straight forward that for each $i\in I$, $N_i$ is infinite. 
Since $\sim_{\MB}$ is $\Aut{}(\MB)$-invariant, we have a natural action of $\Aut{}(\MB)$ on the set of indices $I$: for every $i,j\in I$ let
\[g\cdot i = j\quad\Leftrightarrow\quad\exists a\in N_i\,\,\,g(a)\in N_j\quad\Leftrightarrow\quad\forall a\in N_i\,\,\,g(a)\in N_j.\]
Let $G_0$ be the kernel of the action $\Aut{}(\MB)\curvearrowright I$ and let $H=\Aut{}(\MB)/G_0$. $H$ is a subgroup of $\mathrm{S}_I$, group of all permutations on the set $I$. From the analysis above it follows that the automorphism group $\Aut{}(\MB)$ is a subgroup of the unrestricted Wreath product $G=\mathrm{S}_\infty \,\mathrm{Wr}_I\, H = \big(\prod_{i\in I}\mathrm{S}_\infty\big)\rtimes H$ where the $i$-th copy of $\mathrm{S}_\infty$ is the permutation group of $N_i$; see for example \cite{oligomorphic}. Moreover, by Lemma \ref{permutationz} it follows that  $\Aut{}(\MB)$ lies densely in $G$. Therefore, since $\Aut{}(\MB)$ is a closed subgroup of $\mathrm{S}_\infty$, the groups $\Aut{}(\MB)$ and $G$ are actually equal.

So, we have shown that if $\MB$ is the \Fraisse{} limit of a class $\KC$ that has SAP and does not split then,  $\Aut{}(\MB)=\big(\prod_{i\in I}\mathrm{S}_\infty\big)\rtimes H$ where $H$ is a subgroup of $\mathrm{S}_I$ and $I$ is finite or countably infinite. Notice also that $H$ is a closed subgroup of $\mathrm{S}_I$. Working now towards the opposite direction, let $I$ be a finite or countably infinite set and let $H$ be a closed subgroup of $\mathrm{S}_I$. Let $\boldsymbol{I}_H$ be the canonical $\LC_H$-structure with domain $I$ where the language $\mathcal{\LC}_H=\{R^n_{j}\}$ has one distinct $n$-ary symbol $R^n_{j}$ for each orbit $\mathcal{O}_j$ of the action of $H$ on $I^n$; see for example \cite{oligomorphic}.

Let $\LC=\LC_H\cup\{S\}$, where $S$ is a new binary symbol and let $M=I\times\N$. Consider the $\LC$-structure $\MB_H$ on $M$, where the interpretation is done as follows.

For every for every $R^n_{j}\in\LC_H$ we have
\[\MB_H\models R^n_{j}\big((i_1,m_1),\ldots,(i_n,m_n)\big)\quad\Leftrightarrow\quad\boldsymbol{I}_H\models R^n_{j}(i_1,\ldots,i_n)\]
and
\[\MB_H\models S\big((i_1,m_1),(i_2,m_2)\big)\quad\Leftrightarrow\quad i_1=i_2.\]

Let also $\KC_H=\mathrm{Age}(\MB_H)$. It is easy to check that $\MB_H$ is a countable, ultrahomogeneous structure without algebraicity and that $\Aut{}(\MB_H)=\big(\prod_{i\in I}\mathrm{S}_\infty\big)\rtimes H$. To see that $\KC_H$ does not split let $C\subset M$ finite with $C=\{(i_1,m_1),\ldots,(i_k,m_{k})\}$ where, some ordered couples might share the same index $i\in I$. Let $m=\mathrm{max}\{m_j+1 : 1\leq j\leq k\}$ and let $K=\{(i_1,m),\ldots,(i_k,m)\}$. Then, if $D=K\cup C$, the inclusion embedding $i:\CB\to\DB$ shows that $\CB$ does not split $\KC$.

We collect in the following theorem the above results:
\begin{theorem}\label{theorem_Wreath_Product}
Let $\MB$ be a \Fraisse{} structure without algebraicity. If $\mathrm{Age}(\MB)$ does not split then $\Aut{}(\MB)=\big(\prod_{i\in I}\mathrm{S}_\infty\big)\rtimes H$, where $I$ is a countable possibly finite set of indeces, $H$ is a closed subgroup of $\mathrm{S}_I$, and $H$ acts on $I$ in the natural way. Moreover, if $G$ is a  group isomorphic to $\big(\prod_{i\in I}\mathrm{S}_\infty\big)\rtimes H$ where $I$ and $H$ as above, then, there is a countable ultrahomogeneous  structure without algebraicity $\MB_H$ such that $\mathrm{Age}(\MB_H)$ does not split and such that $G$ is isomorphic to $\mathrm{Aut}(\MB_H)$.
\end{theorem}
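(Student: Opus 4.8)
The plan is to assemble the observations recorded above into a two-part argument, treating the two directions of the theorem separately and leaning on the lemmas already established for classes that do not split.

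For the forward direction I would first use that, since $\MB$ has no algebraicity, $\KC=\mathrm{Age}(\MB)$ has $\mathrm{SAP}$; together with the hypothesis that $\KC$ does not split this lets me invoke Corollary \ref{lemmaEquivalence}, so that $\sim_{\MB}$ is an equivalence relation on $M$. Writing $\{N_i : i\in I\}$ for its classes, with $I$ countable and possibly finite, one checks from $\mathrm{SAP}$ that each $N_i$ is infinite, and so identifies the permutation group of each block $N_i$ with a copy of $\mathrm{S}_\infty$. Because $\sim_{\MB}$ is $\Aut(\MB)$-invariant by Lemma \ref{trivial Lemma}(\ref{trivial Lemma 3}), the group $\Aut(\MB)$ acts on the index set $I$; setting $G_0$ for the kernel and $H=\Aut(\MB)/G_0\leq\mathrm{S}_I$, every automorphism is recorded by its action on each block together with the induced permutation of $I$, which yields the inclusion $\Aut(\MB)\leq G:=\big(\prod_{i\in I}\mathrm{S}_\infty\big)\rtimes H$.

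The crux of this direction is the reverse inclusion, which I would obtain by showing that $\Aut(\MB)$ is \emph{dense} in $G$ and then appealing to the fact that $\Aut(\MB)$ is closed in $\mathrm{S}_\infty$. Density amounts to matching an arbitrary element of $G$ on a prescribed finite set by a genuine automorphism: a basic neighbourhood in $G$ fixes a finite partial bijection that respects the blocks $N_i$ and the induced permutation of $I$, and Lemma \ref{permutationz} guarantees that any such block-respecting map is in fact a partial isomorphism of $\MB$; a standard back-and-forth then extends it to a full automorphism. This is the step I expect to demand the most care, since one must verify that the finite block-respecting data genuinely satisfies the rqf-type hypotheses of Lemma \ref{permutationz} over suitable controlling sets, so that the lemma really applies.

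For the converse, given a finite or countably infinite $I$ and a closed subgroup $H\leq\mathrm{S}_I$, I would first build the canonical relational structure $\boldsymbol{I}_H$ on $I$ in the language $\LC_H$ carrying one $n$-ary symbol for each orbit of the action $H\curvearrowright I^n$; by the standard encoding of closed subgroups of $\mathrm{S}_\infty$ as automorphism groups, $\boldsymbol{I}_H$ is ultrahomogeneous with $\Aut(\boldsymbol{I}_H)=H$. I would then put $M=I\times\N$, adjoin a binary symbol $S$ recording equality of the first coordinate, and lift the $\LC_H$-relations coordinatewise to define $\MB_H$. The remaining work is routine verification: that $\MB_H$ is countable, ultrahomogeneous and has no algebraicity; that its $\sim_{\MB_H}$-classes are exactly the $S$-classes $N_i=\{i\}\times\N$, whence $\Aut(\MB_H)=\big(\prod_{i\in I}\mathrm{S}_\infty\big)\rtimes H$; and that $\KC_H=\mathrm{Age}(\MB_H)$ does not split, which I would see by exhibiting for any finite $C=\{(i_1,m_1),\dots,(i_k,m_k)\}$ a controlling set $K=\{(i_1,m),\dots,(i_k,m)\}$ with $m$ exceeding all the $m_j$, so that the rqf-type of each $(i_\ell,m_\ell)$ over $K$ already determines it over every finite extension.
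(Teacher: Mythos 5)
Your proposal follows the paper's own proof essentially verbatim: the same decomposition of $M$ into the $\sim_{\MB}$-classes with the induced action on the index set $I$, the same density-plus-closedness argument via Lemma \ref{permutationz} to upgrade the inclusion $\Aut(\MB)\leq\big(\prod_{i\in I}\mathrm{S}_\infty\big)\rtimes H$ to equality, and the same construction of $\MB_H$ on $I\times\N$ with the controlling set $K=\{(i_1,m),\ldots,(i_k,m)\}$ witnessing that $\mathrm{Age}(\MB_H)$ does not split. The only step you flag as delicate (checking the hypotheses of Lemma \ref{permutationz} for density) is exactly the step the paper also leaves to the reader, so there is no gap relative to the paper's argument.
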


The question of whether the complete inverse of the first statement of Theorem \ref{theorem_Wreath_Product} holds, remains open. 
\begin{question}
 Let $\MB$ be a \Fraisse{} structure  without algebraicity. Assume moreover that $\Aut{}(\MB)$ is isomorphic as a topological group to $\big(\prod_{i\in I}\mathrm{S}_\infty\big)\rtimes H$, where $H$ and $I$ as above. Is it the case that $\mathrm{Age}(\MB)$ does not split?
\end{question}
\section{The Wijsman hyperspace topology}\label{ChapterWijsman}

In Chapter \ref{ChapterSAP} and Chapter \ref{ChapterStructuralConsequences} we worked with countable relational structures. Given such a structure $\MB$, we  viewed the set of all substructures of $\MB$ as a Polish space. Namely, the cantor space $2^M$. In the next chapter we are going to work with the Urysohn space and the Urysohn sphere. Both are complete metric space of the size of the continuum. Like in Chapter \ref{ChapterSAP}, we will need a natural Polish space whose elements correspond to the substructures of the space under consideration.    

Let $(X,d)$ be some metric space and let $\mathcal{F}(X)$  be the set of all closed subsets of $X$. 
The Wijsman topology on $\mathcal{F}(X)$, introduced in \cite{Wijsman}, is the weakest topology on $\mathcal{F}(X)$ that makes continuous the family of distance functionals $\{d_x\}_{x\in X}$, where $d_x:\mathcal{F}(X)\to\mathbb{R}$ with $d_x(F)=d(x,F)$.  The proof of the following theorem can be found in \cite{Beer}.

\begin{theorem}[Beer]
If the space $(X,d)$ is complete and separable, then the Wijsman topology of $\mathcal{F}(X)$ is Polish.
\end{theorem}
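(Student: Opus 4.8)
The plan is to realize $(\mathcal{F}(X),\text{Wijsman})$ as a $G_\delta$ subspace of a concrete Polish space and then invoke Alexandrov's theorem that a $G_\delta$ subspace of a Polish space is Polish. Fix a countable dense set $\{x_n\}_{n\in\mathbb{N}}\subset X$ and define $\Phi:\mathcal{F}(X)\to[0,\infty]^{\mathbb{N}}$ by $\Phi(F)=(d(x_n,F))_n$, where $[0,\infty]$ carries its usual compact metrizable topology so that $[0,\infty]^{\mathbb{N}}$ is Polish. First I would check that $\Phi$ is a topological embedding. Since each functional $x\mapsto d(x,F)$ is $1$-Lipschitz, the inequality $|d(x,F)-d(x_n,F)|\le d(x,x_n)$ shows that every $d_x$ is a uniform limit of the $d_{x_n}$; hence the initial topology generated by the countable family $\{d_{x_n}\}$ already makes all $d_x$ continuous and therefore coincides with the Wijsman topology, and this initial topology is exactly the one pulled back by $\Phi$ from the product. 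Injectivity of $\Phi$ is immediate, because a closed set is recovered from its distance function via $F=\{x:d(x,F)=0\}$, and that function is determined by its values on the dense set $\{x_n\}$.

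The crux is to characterize the image $W=\Phi(\mathcal{F}(X))$ intrinsically and to show it is $G_\delta$. I would prove that, for $F\neq\emptyset$, a sequence $(r_n)$ lies in $W$ if and only if $r_0<\infty$, each $r_n\ge 0$, the Lipschitz condition $|r_n-r_m|\le d(x_n,x_m)$ holds for all $n,m$, and the following approximate-projection property holds:
\[
(\mathrm{P})\qquad \forall n\,\forall k\,\exists m:\quad r_m<\tfrac1k \ \text{ and }\ d(x_n,x_m)<r_n+\tfrac1k .
\]
The forward direction is an easy $\epsilon$-argument: pick $f\in F$ almost realizing $d(x_n,F)$ and then a dense point $x_m$ near $f$. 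The empty set is handled separately, as it maps to the constant-$\infty$ point, a single closed (hence $G_\delta$) point, and the full image is the union of this point with the nonempty profiles.

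The main obstacle will be the converse: reconstructing a nonempty closed $F$ from a sequence satisfying these conditions, and this is precisely where completeness of $X$ enters. Given such $(r_n)$, I would extend it to the $1$-Lipschitz function $\rho(x)=\inf_n\big(d(x,x_n)+r_n\big)$, which satisfies $\rho(x_n)=r_n$ by the Lipschitz condition, and set $F=\{x:\rho(x)=0\}$. The inequality $d(x,F)\ge\rho(x)$ is formal. For the reverse inequality and for nonemptiness, I would iterate $(\mathrm{P})$ to build, starting from any $x_n$, a sequence of dense points $y_0=x_n,y_1,y_2,\ldots$ with $\rho(y_j)\to 0$ and with the jumps after the first one summably small, so that $d(y_0,y_1)<r_n+\epsilon$ and $\sum_{j\ge 1}d(y_j,y_{j+1})<\epsilon$; completeness of $X$ then yields a limit $z\in F$ with $d(x_n,z)\le r_n+O(\epsilon)$, giving $d(x_n,F)=r_n$. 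This is the step that genuinely requires $X$ to be complete.

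Finally I would assemble the $G_\delta$ bookkeeping. On the open set $\{r_0<\infty\}$ all coordinates are automatically finite by the Lipschitz condition, and there the inequalities $r_n\ge 0$ and $|r_n-r_m|\le d(x_n,x_m)$ define closed subsets of $[0,\infty]^{\mathbb{N}}$, while $(\mathrm{P})$, written as $\bigcap_n\bigcap_k\bigcup_m\{r_m<1/k,\ d(x_n,x_m)<r_n+1/k\}$, is a countable intersection of open sets. Hence $W$ is $G_\delta$ in the Polish space $[0,\infty]^{\mathbb{N}}$, so $W$ is Polish by Alexandrov's theorem; since $\Phi$ is a homeomorphism of $(\mathcal{F}(X),\text{Wijsman})$ onto $W$, the Wijsman topology is Polish.
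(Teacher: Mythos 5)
Your proposal is correct, but note that the paper contains no proof of this statement to compare against: it is quoted as Beer's theorem, and the paper explicitly defers the proof to the cited article of Beer. What you have written is therefore a self-contained proof of an imported result. Your route --- identifying the Wijsman topology with the initial topology of the countably many functionals $d_{x_n}$ attached to a dense sequence, embedding $\mathcal{F}(X)$ into $[0,\infty]^{\mathbb{N}}$ by $F\mapsto (d(x_n,F))_n$, characterizing the image by nonnegativity, the $1$-Lipschitz constraints, and the approximate-projection property $(\mathrm{P})$, and then invoking Alexandrov's theorem once the image is seen to be $G_\delta$ --- is in substance the classical argument for Beer's theorem, and the two load-bearing steps are exactly where you place them: $(\mathrm{P})$ is a countable intersection of opens by inspection, and completeness of $X$ is what allows you to iterate $(\mathrm{P})$ with summable errors to produce an actual point of the reconstructed set $F$, yielding simultaneously $F\neq\emptyset$ and $d(x_n,F)\le r_n$ (the inequality $d(x_n,F)\ge\rho(x_n)=r_n$ being formal).

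Two small points to tidy. First, the uniform-limit argument for continuity of $d_x$ should be run in a fixed compatible metric on $[0,\infty]$ (e.g. $(s,t)\mapsto|\arctan s-\arctan t|$, with $\arctan\infty=\pi/2$), since the bound $|d(x,F)-d(x_n,F)|\le d(x,x_n)$ is literally valid only at nonempty $F$; at $F=\emptyset$ all the functionals take the common value $\infty$, so the uniform estimate persists in that metric and the limit function is continuous. Second, the paper's definition of the Wijsman topology takes $d_x$ to be real-valued, which tacitly ignores $\emptyset$; your convention $d(x,\emptyset)=\infty$ with values in $[0,\infty]$ handles this cleanly, and your observation that the constant-$\infty$ profile is a closed, hence $G_\delta$, singleton (and that a finite union of $G_\delta$ sets is $G_\delta$) closes the gap either way.
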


 From now on $\mathcal{F}(X)$ will always be equipped with the Wijsman topology.  We will say that for a \emph{generic} subspace of $X$ a certain property holds if the set $\mathcal{A}\subset\mathcal{F}(X)$ of all $F$ which have this property  is comeager  in $\mathcal{F}(X)$. Similarly we say that for a \emph{generic} pair of subspaces of $X$ a certain property holds if the set $\mathcal{A}\subset\mathcal{F}(X)\times\mathcal{F}(X)$ of all $(A,B)$ which have this property is comeager  in $\mathcal{F}(X)\times\mathcal{F}(X)$.
 
\section{The Urysohn space $\U$ and the Urysohn sphere $\mathbb{S}$}\label{ChapterUrysohn}

Our aim here is to prove Theorem \ref{UrysohnMainTheorem}. The (separable) \emph{Urysohn metric space} $(\mathbb{U},\rho)$ is the unique, up to isometry, Polish metric  space that satisfies the following properties:
\begin{itemize}
\item[$\cdot$] (ultrahomogeneity) for every two finite isometric subspaces  $A,B\subset\mathbb{U}$ and for every isometry $f:A\to B$, $f$ extends to a full isometry $\tilde{f}$ of $\mathbb{U}$;
\item[$\cdot$] (universality) every Polish metric space is isometric to a subspace of $\mathbb{U}$. 
\end{itemize}    
The Urysohn space was introduced by Urysohn in \cite{Urysohn} but the interest in this space was revived through the work of Kat\v{e}tov and Uspenskij \cite{Katetov, Uspenskij}. Here, in the next couple of paragraphs, we record some definitions and facts commonly used in the study of Urysohn space. For a more detailed exposition the reader may want to consult  Melleray (\cite{MellerayInitial} or \cite{Melleray}).

Let $(A,d_A),(B,d_B)$ be isometric Polish metric spaces. We are going to denote with $\Iso(A,B)$ the space of all (bijective) isometries from $A$ to $B$ and with $\Iso(A)$ the space of all (bijective) isometries from $A$ to $A$. A basic open set $U$ in  $\Iso(A,B)$ can be thought of as a couple $[f,\delta]$, where $f$ is a partial isometry from $A$ to $B$ with $\mathrm{dom}f$ finite and $\delta>0$. If $U\subset\Iso(A,B)$ is a basic open set corresponding to the couple $[f,\delta]$, then $g\in \Iso(A,B)$ belongs to $U$ if for every $a\in \mathrm{dom}f$ we have $d_B(f(a),g(a))<\delta$. 
 
\begin{definition}\label{katetov def}
Let $(X,d)$ be a metric space. A map $g:X\to\mathbb{R}$ is a \emph{Kat\v etov map} on $X$ if 
\[\forall x, y \in X\,\,\, g(x)-g(y)\leq d(x,y) \leq g(x)+g(y).\] 
\end{definition}
If moreover $\mathrm{range}(g)\subset\mathbb{Q}$,  we say that $g$ is a \emph{rational Kat\v etov map}  on $X$. We denote by $E(X)$  the set of all Kat\v etov maps  on $X$, and by  $E^{\mathbb{Q}}(X)$ the set of all rational Kat\v etov maps on $X$.
Following Kat\v etov, we introduce a distance between any pair of maps $g_1,g_2$ that belong to $E(X)$, given by
\[d_X(g_1,g_2)=\sup\{|g_1(x)-g_2(x)|: x\in X\}.\]
This distance renders $(E(X),d_X)$ a complete metric space which extends $(X,d)$ via the identification $x\to g_x(y)=d(x,y)\in E(X),$ for every $x\in X$. We are interested in a specific subset of $E(X)$ which plays important role in the study of Urysohn space. Let $Y$ be a subset of $X$ and $g$ a Kat\v etov map  on $Y$. We can extend $g\in E(Y)$ to $\tilde{g}\in E(X)$ by letting
\[\tilde{g}(x)=\mathrm{inf}\{g(y)+d(x,y): y\in Y\}.\]
We call $\tilde{g}$ the \emph{Kat\v etov extension} of $g$ to $X$. If $f \in E(X)$ and $Y\subset X$ are such that $f$ is the Kat\v etov extension of $f\res_Y$ to $X$, we say that $Y$ is a support of $f$. The set
\[E(X,\omega)=\{g\in E(X): g \,\,\text{has finite support} \}\]
should be thought of as the set of all the rqf-types over every finite substructure of a structure that we saw in Chapter \ref{ChapterSAP}.  Kat\v{e}tov maps on $X$ with finite support can be approximated by functions that belong to   
\[E^{\mathbb{Q}}(X,\omega)=\{g\in E^{\mathbb{Q}}(X): g \,\,\text{has finite support} \}.\] 
Moreover, if $X$ is separable and $D\subseteq X$ is countable and dense in $X$, then $E^{\mathbb{Q}}(D,\omega)$ is a countable dense subset of $E(X,\omega)$.

There is another useful characterization of Urysohn space. Let  $X$ be a metric space. We say that $X$ has the \emph{approximate extension property}, if for every finite $A$ subset of $X$, every $g\in E(A)$ and every $\varepsilon>0,$ there is a $z\in X$ such that for every $a\in A$ we have $|d(z,a)-g(a)|\leq\varepsilon$. We say that $X$ has the \emph{extension property} if we can take $\varepsilon=0$ in the above definition. For a complete separable metric space $X$, the following are equivalent:
\begin{itemize}
\item[$\cdot$] $X$ is isometric to the Urysohn space.
\item[$\cdot$] $X$ has the extension property.
\item[$\cdot$] $X$ has the approximate extension property.
\end{itemize}
Uspenskij in \cite{Uspenskij}, used Kat\v{e}tov's tower construction \cite{Katetov} to prove that for every Polish metric space $X$ we can find subspaces $B$ of $\mathbb{U}$ isometric to $X$ for which every isometry of $B$ extends to  a full isometry of $\mathbb{U}$. Therefore, there are subspaces of $\U$, of infinite cardinality, which are global. 
The following example shows that $\U$ contains non-global subspaces too (see also \cite{MellerayInitial} for non-global embeddings of every non-compact space). 
\begin{example}\label{Example_not_free_subspace_of_Urysohn}
Let $x_0,x_1,x_2$ be points in $\U$ such that $\rho(x_0,x_1)=2, \rho(x_1,x_2)=1, \rho(x_0,x_2)=3$ let also $\e$ with $0<\e<1$. Set $A=\U\stm B(x_0,\e)$. Then it is easy to see that $A$ has the extension property and therefore it is isomorphic to $\U$. So, there is an isometry $f:A\to A$ sending $x_1$ to $x_2$. This isometry cannot be extended to a full isometry of $\U$. Moreover, the choice of $\e$ ensures that there is a uniform lower bound bigger than zero  (say $1-\e$) between $g\res A$ and $f$ for every isometry $g$ of $\U$.     
\end{example}

\begin{definition}\label{definitionAbsorbsUrysohn}
Let $D\subseteq\mathbb{U}$. We say that $D$ \emph{absorbs points} if $D\neq\emptyset$, and for every finite $X=\{x_1,\ldots,x_k\}\subset\mathbb{U}$, for every $g\in E(X)$ with $\rho(x_i,D)<g(x_i)$  there is a $z\in D$ such that $\rho(z,x_i)=g(x_i)$.
\end{definition}
\begin{rem} \label{Remark absorbs-->Ury}
Notice that if $F$ is a closed subset of $\mathbb{U}$ and $F$ absorbs points, then $F$ is complete, separable and has the extension property. Therefore it is isomorphic to the Urysohn space. 
\end{rem}

\begin{lm}\label{TheoremUrysohnAbsorbsPoints}
For a generic $F\in\mathcal{F}(\mathbb{U})$, $F$ 
absorbs points.
\end{lm}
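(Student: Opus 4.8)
The plan is to show that the set $\mathcal{A}=\{F\in\mathcal{F}(\U): F \text{ absorbs points}\}$ is comeager by producing a dense $G_\delta$ set inside it, exactly mirroring the proof of Lemma \ref{pair} but now in the continuous setting. The discrete proof worked because the index set $\mathcal{I}$ of pairs $(X,p)$ was \emph{countable}; here the analogous data, a finite $X\subset\U$ together with a Kat\v etov map $g\in E(X)$, forms an uncountable family, so the first task is to cut it down to a countable dense skeleton. The natural candidate is to fix a countable dense $D\subseteq\U$ and work with the countable set $E^{\mathbb{Q}}(D,\omega)$ of rational finitely-supported Kat\v etov maps, which (as recorded in the excerpt) is dense in $E(\U,\omega)$. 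The absorption condition for an \emph{arbitrary} finite $X$ and $g\in E(X)$ should then follow by an $\e$-approximation argument from the conditions indexed by this countable skeleton together with the approximate extension property of $\U$.

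First I would enumerate $E^{\mathbb{Q}}(D,\omega)=\{g_m:m\in\N\}$, each $g_m$ supported on a finite $X_m\subset D$. For each $m$ and each rational $\e>0$ I would define the set
\[
\mathcal{U}_{m,\e}=\bigl\{F\in\mathcal{F}(\U):\ \exists z\in F\ \forall x\in X_m\ |\rho(z,x)-g_m(x)|<\e\bigr\}.
\]
The key topological point is that each $\mathcal{U}_{m,\e}$ is \emph{open} in the Wijsman topology: the condition ``there is a point of $F$ at prescribed approximate distances from the finitely many points of $X_m$'' is, up to taking a witness in a suitable dense set, expressible through finitely many of the basic distance functionals $d_x$, $x\in X_m$, whose continuity \emph{defines} the Wijsman topology. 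I would also verify that each $\mathcal{U}_{m,\e}$ is \emph{dense}: given any basic Wijsman-open neighborhood, which constrains $F$ only through finitely many distance functionals, I can use the extension property of $\U$ to find an actual point realizing $g_m$ up to $\e$, and add it to a closed set inside the neighborhood without violating the finitely many constraints. Setting $\mathcal{G}=\bigcap_m\bigcap_{\e\in\mathbb{Q}^+}\mathcal{U}_{m,\e}$ then yields a dense $G_\delta$.

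The remaining step is to show $\mathcal{G}\subseteq\mathcal{A}$, i.e.\ that membership in every $\mathcal{U}_{m,\e}$ forces genuine absorption for \emph{all} finite $X$ and \emph{all} $g\in E(X)$, not merely the rational skeleton ones. Given arbitrary $X=\{x_1,\dots,x_k\}$, $g\in E(X)$ with $\rho(x_i,F)<g(x_i)$, I would approximate $X$ by points of $D$ and $g$ by some $g_m\in E^{\mathbb{Q}}(D,\omega)$ within $\e$; the strict inequalities $\rho(x_i,F)<g(x_i)$ give room so that a point realizing $g_m$ up to $\e$ in $F$ realizes $g$ up to $2\e$, and then a diagonal/Cauchy argument over $\e\to 0$ inside the \emph{closed, complete} set $F$ produces an exact witness $z\in F$ with $\rho(z,x_i)=g(x_i)$. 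The main obstacle, and the step demanding the most care, is this passage from approximate to exact realization: one must arrange the approximating witnesses so that they form a Cauchy sequence converging to a limit that lies in $F$ (using closedness) and hits the distances \emph{on the nose}. This is where completeness of $F$ and the precise bookkeeping of the Kat\v etov inequalities enter, and it is the genuinely continuous phenomenon that has no counterpart in the discrete Lemma \ref{pair}.
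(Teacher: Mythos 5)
Your overall architecture coincides with the paper's --- a countable skeleton $E^{\mathbb{Q}}(D,\omega)$ over a countable dense $D$, a $G_\delta$ set built from countably many conditions, and a Cauchy-sequence upgrade from approximate to exact witnesses inside the complete closed set $F$ --- but your basic sets $\mathcal{U}_{m,\e}$ omit an essential disjunct, and the omission is fatal rather than cosmetic. ``Absorbs points'' is a \emph{conditional} property: a witness is owed only when $\rho(x,F)<g(x)$ for every $x$ in the support. Your $\mathcal{U}_{m,\e}$ demands a witness unconditionally, and such sets are not dense. Concretely, let $g_m$ be the rational Kat\v etov map with support $\{p\}$, $p\in D$, and $g_m(p)=1$; the nonempty Wijsman-open set $W=\{F':\,|\rho(p,F')-10|<1/10\}$ is disjoint from $\mathcal{U}_{m,\e}$ for every $\e<8$, since any $F'\in W$ has $\rho(p,F')>9$, while a witness $z\in F'$ would force $\rho(p,F')<1+\e$. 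Worse, your intersection $\mathcal{G}=\bigcap_{m,\e}\mathcal{U}_{m,\e}$ is exactly the singleton $\{\U\}$: if $F\neq\U$, pick $x\notin F$, set $d=\rho(x,F)>0$, pick $p\in D$ with $\rho(p,x)<d/4$ and $g_m$ supported on $\{p\}$ with $0<g_m(p)<d/4$; then every $z\in F$ has $|\rho(z,p)-g_m(p)|\geq d/2$, so $F\notin\mathcal{U}_{m,\e}$ once $\e<d/2$. So your $G_\delta$ is nowhere dense, not comeager. The same defect sinks the density argument you sketch: adding a point to a closed set can only \emph{decrease} the distance functionals, so when the given neighborhood pins some $\rho(y,F)$ at a value larger than the added witness permits, the enlarged set exits the neighborhood.

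The repair is precisely the guard the paper builds into its set $\mathcal{A}$: the condition indexed by $(X_m,g_m,\e)$ must read ``either $\rho(x,F)\geq g_m(x)$ for some $x\in X_m$, or there is a witness''; the first disjunct is closed, hence $G_\delta$, so the countable conjunction is still $G_\delta$. With the guard in place, your ``add one point'' argument does prove density of each such set (arguably more cleanly than the paper, which instead exhibits for each $F$ explicit sets $F_n\in\mathcal{A}$ converging to $F$): when the guard fails one has $g_m(x)>\rho(x,F)$ on all of $X_m$, and then the Kat\v etov extension $\tilde{g}_m$ to $X_m\cup\{y_1,\ldots,y_l\}$ satisfies $\tilde{g}_m(y_j)\geq\rho(y_j,F)$ at every constraint point $y_j$, so the point of $\U$ realizing $\tilde{g}_m$ can be adjoined to $F$ without moving any of the finitely many functionals. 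Two further soft spots. First, your justification of openness is wrong as stated: the witness condition is \emph{not} expressible through the functionals $d_x$, $x\in X_m$ (two different points of $F$ may realize the two prescribed distances separately with no single point realizing both); openness instead comes from the functional $d_q$ at a dense-set point $q$ near the witness. Second, the approximate-to-exact step you flag at the end really does require the paper's anchoring device: at stage $n+1$ the support of the next rational Kat\v etov map must include a point close to the previous witness $z_n$, with prescribed value close to $\sup_i|\rho(z_n,x_i)-g(x_i)|$; this is what forces $\rho(z_{n+1},z_n)$ to be small. Merely letting $\e\to0$ produces witnesses with no reason to form a Cauchy sequence.
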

\begin{proof}

Let $P$ be a countable dense subset of $\mathbb{U}$. Consider  the set
\begin{eqnarray*}
\mathcal{A}=\big\{F\in\mathcal{F}(\mathbb{U}): \forall\,\,\text{finite}\,\,A=\{a_1,\ldots,a_k\}\subset P \quad\forall g\in E^\mathbb{Q}(A)\quad\forall m\in\mathbb{N}\\
\rho(a_1,F)\geq g(a_1)\vee\ldots\vee \rho(a_k,F)\geq g(a_k)\quad\text{or}\quad\exists z\in U\\
\text{such that}\,\,\,\rho(a_1,z)=g(a_1)\,\wedge\,\ldots\,\wedge\, \rho(a_k,z)=g(a_k)\,\wedge\, \rho(z,F)<\frac{1}{m}\}.
\end{eqnarray*}
Closed conditions in a Polish space are also $G_\delta$. So $\mathcal{A}$  a $G_\delta$ subset of $\mathcal{F}(\mathbb{U})$. We now show that $\mathcal{A}$ is also dense in $\mathcal{F}(\mathbb{U})$. Let $F\in\mathcal{F}(\mathbb{U})$. We will find a sequence $\{F_n\}_{n\in\N}$ of sets from $\mathcal{A}$ that converges to $F$ in the Wijsman topology. If $F=\mathbb{U}$, then the sequence $F_n=\mathbb{U}$ for every $n$ lies in $\mathcal{A}$ and converges to $\mathbb{U}$. If $F\neq\mathbb{U}$, let $\{x_n\}_{n\in\mathbb{N}}$ be a dense subset of $F^c$ and   $d_n=\rho(x_n,F)>0.$ Let 
\[F_n=\big\{x\in\mathbb{U}: \rho(x,x_1)\geq d_1\cdot(1-\frac{1}{n}) ,\ldots, \rho(x,x_n)\geq d_n\cdot(1-\frac{1}{n}) \big\}.\] 
Clearly $\{F_n\}_{n\in\N}$ converges to $F$. Fix now a $n\in\mathbb{N}$. We will show that $F_n$ belongs to $\mathcal{A}$. Let $A=\{a_1,\ldots,a_k\}\subset P$,  $g\in E^\mathbb{Q}(A)$ and $m\in\mathbb{N}$. Assume moreover that $\rho(a_i,F)<g(a_i)$ for every $i$. Let $\tilde{g}$ be the Kat\v{e}tov extension of $g$ to $A\cup\{x_1,\ldots x_n\}$. The extension property of the Urysohn space gives us a point $z\in\mathbb{U}$ that realizes the distances given by $\tilde{g}$.  Moreover, for every $j\in\{1,\ldots,n\}$ we have that
\begin{eqnarray*}
\tilde{g}(x_j)=\mathrm{inf}\{\rho(x_j,a_i)+g(a_i) : 1\leq i\leq k\}>\\
\mathrm{inf}\{\rho(x_j,a_i)+\rho(a_i,F) : 1\leq i\leq k\}\geq\\ \rho(x_j,F)> d_j\cdot(1-\frac{1}{n}).
\end{eqnarray*}
Therefore $0=\rho(z,F_n)<\frac{1}{m}$, which proves that $F_n$ belongs to $\mathcal{A}$ for every $n\in\mathbb{N}$.
  
We showed so far that $\mathcal{A}$ is a comeager subset of $\mathcal{F}(\mathbb{U})$. We now  proceed to prove that for every $F$ that belongs to $\mathcal{A}$, $F$ absorbs points. Let $F\in\mathcal{A}$ and $X=\{x_1,\ldots,x_k\}\subset\mathbb{U}$. Let also $g\in E(X)$, with $\rho(x_i,F)<g(x_i)$ for every $i\in\{1,\ldots,k\}$. Urysohn space is complete, so it suffice to find a sequence $\{z_n\}_{n\in\N}$ in $\mathbb{U}$ such that for every $n\in\mathbb{N}$ we have:
\begin{itemize}
\item[$\cdot$] $|\rho(z_n,x_i)-g(x_i)|<2^{-n}$ for every $i\in\{1\ldots,k\}$;
\item[$\cdot$] $\rho(z_{n},F)<2^{-n}$, and
\item[$\cdot$] $\rho(z_{n+1},z_n)<2^{1-n}$.
\end{itemize} 
Let  $d=\mathrm{min}\{g(x_i)-\rho(x_i,F) : 1\leq i\leq k\}>0$ and let $\{\delta_n\},\{\delta'_n\}$ be two sequences of positive real numbers with $\delta_n,\delta'_n<\mathrm{min}\{d,1\}\cdot 2^{-(n+1)}$. For $n=1$ let $P_1=\{p_1^1,p_2^1,\ldots,p_k^1\}\subset P$ with $\rho(p_i^1,x_i)<\delta_1$ for all $i\in\{1,\ldots k\}$ and let $g_1\in E^\mathbb{Q}(P_1)$ with $|g_1(p^1_i)-g(x_i)|<\delta'_1$ for all $i\in\{1,\ldots,k\}$. These conditions imply that $g_1(p_i^1)>\rho(p_i^1,F)$. Therefore, by definition of $\mathcal{A}$, we can find a $z_1$ such that
\begin{itemize}
\item[$\cdot$] $|\rho(z_1,x_i)-g(x_i)|\leq \delta_1+\delta'_1<2^{-1}$ for every $i\in\{1\ldots,k\}$, and
\item[$\cdot$] $\rho(z_{1},F)<2^{-1}$.
\end{itemize}
Suppose now that we have defined $z_1,\ldots,z_n$ fulfilling the above properties. Let $f_n\in  E(\{x_1,\ldots,x_k\})$ with $f_n(x_i)=d(x_i,z_n)$. Then $d_X(f_n,g)=\mathrm{sup}\{|f_n(x_i)-g(x_i)| :1\leq i\leq k\}<2^{-n}$.

We define now $P_{n+1}=\{p_1^{n+1},p_2^{n+1},\ldots,p_k^{n+1}\}\cup\{p_*^{n+1}\}\subset P$ with $\rho(p_*^{n+1},z_{n})<\delta_{n+1}$, $\rho(p_i^{n+1},x_i)<\delta_{n+1}$ for all $i\in\{1,\ldots k\}$  and $g_{n+1}\in E^\mathbb{Q}(P_{n+1})$ with $|g_{n+1}(p^{n+1}_*)-d_X(f_n,g)|<\delta'_{n+1}$ and $|g_{n+1}(p^{n+1}_i)-g(x_i)|<\delta'_{n+1}$ for all $i\in\{1,\ldots,k\}.$
 Again these conditions imply that $g_{n+1}(p_i^{n+1})>\rho(p_i^{n+1},F)$ so we get a $z_{n+1}$
such that
\begin{itemize}
\item[$\cdot$] $|\rho(z_{n+1},x_i)-g(x_i)|\leq \delta_{n+1}+\delta'_{n+1}<2^{-(n+1)}$ for every $i\in\{1\ldots,k\}$;
\item[$\cdot$] $\rho(z_{n+1},F)<2^{-(n+1)}$, and
\item[$\cdot$] $\rho(z_{n+1},z_{n})\leq d_X(f_n,g)+\delta_{n+1}+\delta'_{n+1}<2^{-n}+2^{-(n+1)}<2^{1-n}.$
\end{itemize}
Which proves that  every $F\in\mathcal{A}$ absorbs points and therefore, the set of all closed subsets $F$ of $\mathbb{U}$ that 
absorb points 
is a comeager subset of $\mathcal{F}(\mathbb{U})$.   
\end{proof}
By the Lemma \ref{TheoremUrysohnAbsorbsPoints} and Remark \ref{Remark absorbs-->Ury} we have the following theorem.
\begin{theorem}\label{Theoerem Generic is Urysohn}
For a generic $F\in\mathcal{F}(\mathbb{U})$, $F$ is isometric to $\U$.
\end{theorem}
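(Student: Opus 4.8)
The plan is to deduce the theorem immediately by combining the two preceding results, since all of the real work has already been done. First I would introduce the set
\[
\mathcal{A}_0=\{F\in\mathcal{F}(\mathbb{U}): F\ \text{absorbs points}\}
\]
and invoke Lemma \ref{TheoremUrysohnAbsorbsPoints}, which asserts precisely that $\mathcal{A}_0$ is a comeager subset of $\mathcal{F}(\mathbb{U})$ in the Wijsman topology.

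Next I would pass from the absorbing property to the isometry type. By Remark \ref{Remark absorbs-->Ury}, each $F\in\mathcal{A}_0$ is complete (being a closed subspace of the complete space $\mathbb{U}$), separable (being a subspace of the separable space $\mathbb{U}$), and has the extension property; hence, by the characterization of the Urysohn space recalled above, every such $F$ is isometric to $\U$. In particular $\mathcal{A}_0\subseteq\{F\in\mathcal{F}(\mathbb{U}): F\cong\U\}$.

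Finally, since any superset of a comeager set is itself comeager, the set $\{F\in\mathcal{F}(\mathbb{U}): F\cong\U\}$ is comeager, which is exactly the assertion that the generic $F\in\mathcal{F}(\mathbb{U})$ is isometric to $\U$. There is essentially no obstacle in this deduction: all of the substance sits in the proof of Lemma \ref{TheoremUrysohnAbsorbsPoints}, whose genuinely delicate step is, given an arbitrary closed set, producing a Wijsman-convergent sequence $\{F_n\}$ together with a Cauchy sequence of near-realizers $\{z_n\}$ that simultaneously approximate $F$ and realize the prescribed Kat\v{e}tov distances exactly in the limit. The only point one should double-check in the present argument is that the degenerate case $g(x_i)=0$ in the extension property is harmless, as it forces the realizing point to be $x_i\in F$ itself; this is already accounted for in Remark \ref{Remark absorbs-->Ury}.
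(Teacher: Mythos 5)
Your proposal is correct and follows exactly the paper's own argument: the paper deduces the theorem in one line by combining Lemma \ref{TheoremUrysohnAbsorbsPoints} (the absorbing sets form a comeager subset of $\mathcal{F}(\mathbb{U})$) with Remark \ref{Remark absorbs-->Ury} (every closed absorbing set is complete, separable, and has the extension property, hence is isometric to $\U$). Your additional observation about the degenerate case $g(x_i)=0$ is a reasonable sanity check but does not change the argument.
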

With the following lemma we establish in relation to the Definition \ref{splits}, that the class of all finite metric spaces ``splits'' in a uniform way.

\begin{lm}\label{lemma g1 g2}
Let $d_*$ be a positive real number. Then for every $\varepsilon<d_*$ and every finite metric space $(X,d)$ with $X=\{a_1,\ldots,a_n,c\}$ such that $d(c,a_i)\geq d_*$, for all $i\in\{1,\ldots,n\}$, there are $g_1,g_2\in E(X)$ such that 
\begin{itemize}
\item[$\cdot$]  $g_1(a_i)=g_2(a_i)$ for all $i\in\{1,\ldots,n\}$;
\item[$\cdot$]  $g_1(c),g_2(c)\geq d_*$, and
\item[$\cdot$] $|g_1(c)-g_2(c)|>\varepsilon$.
\end{itemize}
\end{lm}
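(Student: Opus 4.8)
The plan is to interpret a Katětov map on $X$ as the distance function of a virtual new point, so that finding $g_1,g_2$ reduces to exhibiting two virtual points that are equidistant from each $a_i$ but sit at two different distances from $c$, both at least $d_*$ and more than $\varepsilon$ apart. The hypothesis $d(c,a_i)\geq d_*$ is exactly what creates the room to move the value at $c$ while freezing the values at the $a_i$: writing $m=\min_i d(c,a_i)\geq d_*$, the admissible values for a Katětov map at $c$, once its values at the $a_i$ are all fixed to a common large constant $R$, form an interval of radius $m$ about $R$, and $m\geq d_*>\varepsilon$ supplies the separation budget.

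Concretely, first I would fix $R\geq\max\{d_*,\mathrm{diam}(X)\}$ and let $g_1\equiv R$ be the constant map. For such $R$ the only nontrivial Katětov inequality, $d(x,y)\leq g_1(x)+g_1(y)=2R$, holds since $d(x,y)\leq\mathrm{diam}(X)\leq R$, and clearly $g_1(c)=R\geq d_*$. I would then define $g_2$ by keeping $g_2(a_i)=R$ for all $i$ and setting $g_2(c)=R+m$, so that the two maps are forced to agree on each $a_i$ by construction.

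The main (and essentially only) thing to check is that $g_2$ is again Katětov, and this is where the hypothesis enters. The pairs $(a_i,a_j)$ are handled exactly as for $g_1$. For a pair $(a_i,c)$ the lower inequality reads $|g_2(a_i)-g_2(c)|=m\leq d(c,a_i)$, which holds precisely because $m=\min_i d(c,a_i)$, while the upper inequality $d(c,a_i)\leq g_2(a_i)+g_2(c)=2R+m$ holds since $d(c,a_i)\leq\mathrm{diam}(X)\leq R$. Thus $g_2\in E(X)$ with $g_2(c)=R+m\geq d_*$, the two maps agree on every $a_i$, and $|g_1(c)-g_2(c)|=m\geq d_*>\varepsilon$, yielding all three required properties. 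I anticipate no real obstacle beyond the bookkeeping of these inequalities at the single pair $(c,a_i)$; the one point to be careful about is that $g_2(c)=R+m$ sits exactly at the endpoint of the admissible range $[R-m,R+m]$ for the value at $c$, so these inequalities are tight but still satisfied.
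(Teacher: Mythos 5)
Your proof is correct and takes essentially the same approach as the paper's: both make $g_1$ a large constant Kat\v{e}tov map and obtain $g_2$ by altering the value at $c$ alone, using the hypothesis $d(c,a_i)\geq d_*>\varepsilon$ as the room for a perturbation larger than $\varepsilon$. The only cosmetic difference is that the paper shifts the value at $c$ downward by some $\delta\in(\varepsilon,d_*)$ from the constant $2\,\mathrm{diam}(X)$, staying strictly inside the admissible interval, whereas you shift upward by exactly $m=\min_i d(c,a_i)$, landing on (but, as you note, still within) its endpoint.
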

\begin{proof}
Let $D=\mathrm{diam}(X)$ and pick any $\delta\in(\e,d_*)$. Define $g_1,g_2:X\to\mathbb{R}$ with $g_1(a_i)=g_2(a_i)=2D$ for every $i\in\{0,\ldots,n\}$, $g_1(c)=2D$ and $g_2(c)=2D-\delta$. 
\end{proof}

\begin{lm}\label{LemmaUrysohnNotExtend}
Let $A,B\in\mathcal{F}(\mathbb{U})$ such that $A,B$ absorb points and $A,B\neq\mathbb{U}$. Then, $\mathcal{E}(A,B)$ is a meager subset of $\mathrm{Iso}(A,B)$.
\end{lm}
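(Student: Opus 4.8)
The plan is to mimic the combinatorial argument of Lemma~\ref{not extend} in the metric setting, using the Banach--Mazur game to show that the complement $\mathcal{N}=\mathrm{Iso}(A,B)\setminus\mathcal{E}(A,B)$ is comeager in $\mathrm{Iso}(A,B)$. By Theorem~\ref{banach-mazur} it suffices to produce a winning strategy for player~II. The key geometric input will be Lemma~\ref{lemma g1 g2}, which plays the role of ``splitting'': it lets us attach a new point $c$ whose distance to a distinguished far-away configuration can be realized in two genuinely different ways while leaving all other distances fixed. Since the class of finite metric spaces always splits in this uniform sense, we expect to be able to run the argument for \emph{every} such $A,B$, rather than only generic ones.

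First I would fix a point $c_*\in A^c$ together with a lower bound $d_*>0$ on its distance to some finite ``anchor'' set, playing the role of $C_*$ from Lemma~\ref{not extend}; since $A$ absorbs points, such configurations are abundant. I would then enumerate $\{g_k : k\in\N\}$ all finite partial isometries of $\U$ that are compatible with some $h\in\mathcal{E}(A,B)$ and whose (finite) domain contains a neighborhood-witness for $c_*$, so that every element of $\mathcal{E}(A,B)$ is compatible with some $g_k$. At stage $n$, given player~I's move $[h_n,\delta_n]$, player~II finds the least $k_n$ with $g_{k_n}$ compatible with $h_n$, extends to a common finite partial isometry $g$ whose domain includes the anchor for $c_*$, and then invokes Lemma~\ref{lemma g1 g2} to obtain two Kat\v{e}tov maps $g_1,g_2$ agreeing on the anchor points but differing by more than $\varepsilon$ at $c$. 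Using that $A$ and $B$ absorb points, player~II realizes $g_1$ by a point inside $A$ and $g_2$ by a point inside $B$, producing a partial isometry $\tilde h_n$ (the restriction living on $A$) that is incompatible with $g_{k_n}$ by a margin bounded below by $\varepsilon/2$, say. As in Example~\ref{Example_not_free_subspace_of_Urysohn}, this uniform gap is what forces genuine non-extendability rather than mere approximate failure.

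Simultaneously player~II interleaves a back-and-forth system between $A$ and $B$ (both isometric to $\U$ by Remark~\ref{Remark absorbs-->Ury}) to guarantee that the resulting infinite play $h=\bigcup_n \tilde h_n$ is a genuine surjective isometry from $A$ onto $B$. The construction ensures $h$ is incompatible with every $g_k$, hence $h$ admits no extension to a global isometry of $\U$ fixing the required distances to $c_*$; therefore $h\in\mathcal{N}$, and player~II wins. Concluding, $\mathcal{N}$ is comeager and $\mathcal{E}(A,B)$ is meager.

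The main obstacle, as compared with the discrete case, is metric rather than combinatorial: I must control the game's $\delta_n$-parameters so that the two requirements---staying inside the open set $[h_n,\delta_n]$ demanded by player~I, and maintaining a \emph{uniform} positive separation from all $g_k$---remain simultaneously satisfiable in the limit. The absorption hypothesis only gives points realizing prescribed distances \emph{exactly}, so the delicate point is arranging that the realized witnesses for $g_1$ and $g_2$ both fall in $A$ and $B$ respectively while the fixed difference $|g_1(c)-g_2(c)|>\varepsilon$ survives all later refinements; this is where the uniform constant $d_*$ and the explicit bound in Lemma~\ref{lemma g1 g2} must be tracked carefully, much as the $1-\varepsilon$ lower bound was tracked in Example~\ref{Example_not_free_subspace_of_Urysohn}.
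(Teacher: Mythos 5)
Your overall architecture (Banach--Mazur game via Theorem~\ref{banach-mazur}, the splitting Lemma~\ref{lemma g1 g2}, realization of the two Kat\v{e}tov maps inside $A$ and $B$ by absorption, an interleaved back-and-forth to make the limit a genuine isometry of $A$ onto $B$) is the same as the paper's, but there is a genuine gap at the very first step of your diagonalization. You propose to enumerate as $\{g_k : k\in\N\}$ \emph{all} finite partial isometries of $\U$ that are compatible with some $h\in\mathcal{E}(A,B)$ and contain (a witness for) $c_*$ in their domain. In Lemma~\ref{not extend} such an enumeration exists because the ambient structure $\MB$ is countable; in $\U$ this collection has the cardinality of the continuum --- already the possible images of $c_*$ under global extensions form a typically uncountable subset of $B^c$ --- so no countable list exists and the instruction ``let $k_n$ be the least index with $g_{k_n}$ compatible with $h_n$'' cannot even be formulated. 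A second, related defect: exact compatibility is the wrong notion here, since basic open sets of $\mathrm{Iso}(A,B)$ are $\delta$-balls $[f,\delta]$ rather than cones of exact extensions, and your stage-$n$ move also needs an anticipated image point for $c_*$ in $B^c$ (your text says ``realize $g_2$ by a point inside $B$,'' but $g_2$ lives on $A_n\cup\{c_*\}$, so this is meaningless until one fixes a candidate image $y_n$ of $c_*$ and transports $g_2$ to $B_n\cup\{y_n\}$).

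The paper's proof resolves exactly this point by diagonalizing against a countable open cover instead of a countable list of maps. It sets $C=\{y\in B^c : \exists h\in\mathcal{E}(A,B)$ whose global extension sends $c_*$ to $y\}$ and, using separability of $\U$, covers $C$ by open sets $\{W_i\}_{i\in\N}$ of diameter less than $\varepsilon/2$. At stage $n$ player II attacks the least index $i_n$ for which some extendable $h$ consistent with the current position has $\tilde{h}(c_*)\in W_{i_n}$, fixes $y_n=\tilde{h}(c_*)$, gets $g_1,g_2$ from Lemma~\ref{lemma g1 g2} with $g_1\res_{A_n}=g_2\res_{A_n}$ and $|g_1(c_*)-g_2(c_*)|>\varepsilon$, realizes $g_1$ by $z_n\in A$ and $g_2\circ h_n^{-1}$ (computed on $B_n\cup\{y_n\}$) by $z'_n\in B$, and plays $[f'_n,\delta'_n]$ with $f'_n(z_n)=z'_n$ and $\delta'_n\leq\min\{\delta_n,\varepsilon/2,2^{-n}\}$. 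If some later $h$ in this ball had a global extension $\tilde{h}$ with $\tilde{h}(c_*)\in W_{i_n}$, then $|g_1(c_*)-g_2(c_*)|\leq\rho\big(\tilde{h}(z_n),z'_n\big)+\rho\big(\tilde{h}(c_*),y_n\big)<\varepsilon/2+\varepsilon/2$, a contradiction; so each $W_i$ is permanently excluded, and since any global extension of the limiting isometry would send $c_*$ into some $W_i$, the limit lies in $\mathcal{N}$. This replacement of ``countably many partial maps, exact incompatibility'' by ``countably many small open sets, $\varepsilon$-robust incompatibility'' is precisely where the metric case diverges from the combinatorial one; your proposal correctly senses that a uniform gap must be tracked, but leaves this essential step unresolved.
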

\begin{proof}
The proof is essentially the same as in Lemma \ref{not extend}. Therefore, we will skip the details. Players I and II take turns playing open sets in the Banach Mazur game $G^{**}(\mathcal{N},\mathrm{Iso}(A,B))$ where 
$\mathcal{N}=\mathrm{Iso}(A,B)\setminus\mathcal{E}(A,B)$. Let $c_*\in A^c$, let $d_*\in\mathbb{R}$ with $0<d_*<\rho(c_*,A)$ and let $\varepsilon$ with $0<\varepsilon<d_*$. Consider the set 
\[C=\{y\in B^c : \exists h\in\mathcal{E}(A,B)\,\,\text{with}\,\,h(c_*)=y \},\]
and let $\{W_i\}_{i\in\mathbb{N}}$ be an open covering of $C$ with $\mathrm{diam}(W_i)<\frac{\varepsilon}{2}$ for every $i\in\mathbb{N}$. 

Assume that in the $n$-th step, Player I has played the open set $U_n=[f_n,\delta_n]\subset\mathrm{Iso}(\mathbb{U})$, where $f_n$ is an isometry between the finite subspaces $A_n\subset A$ and $B_n\subset B$. Assume also that $i_n$ is the smallest index for which there is an  $h\in\mathcal{E}(A,B)$ with $h(c_*)\in W_{i_n}$ and let  $y_n=h(c_*)$ for any $h$ as above. Let also $h_n:A_n\cup\{c_*\}\to B_n\cup\{y_n\}$ be the unique partial isometry that extends $f_n$ to $A_n\cup\{c_*\}$.

Player II will make use of  Lemma $\ref{lemma g1 g2}$ to get $g_1,g_2\in E(A_n\cup\{c_*\})$ with $g_1\res_{A_n}=g_2\res_{A_n}$ and $|g_1(c_*)-g_2(c_*)|>\varepsilon$. Moreover, by adding if necessary the same constant function to $g_1$ and $g_2$, he can arrange so that $g_i(c_*)>\rho(c_*,A)$ for $i\in\{1,2\}$. Due to the fact that both $A$ and $B$ absorb points, he will find points $z_n\in A$ and $z'_n\in B$ such that $\rho(x,z_n)=g_1(x)$ for every $x\in A_n\cup\{c_*\}$ and   $\rho(x,z'_n)=g_2\circ h^{-1}_n(x)$ for every $x\in B_n\cup\{y_n\}$. Player II will play his $n$-th move $V_n=[f'_n,\delta'_n]$ where 
\[f'_n:A_n\cup\{z_n\}\to B_n\cup\{z'_n\}\quad\text{with}\quad f'_n\res_{A_n}=f_n,\]
$f'_n(z_n)=z'_n$ and $\delta'_n=\mathrm{min}\{\delta_n,\frac{\varepsilon}{2},2^{-n}\}$. As in  Lemma \ref{not extend} this leads to a winning strategy for Player II and by Theorem \ref{banach-mazur} we have that $\mathcal{E}(A,B)$ is a meager subset of $\mathrm{Iso}(A,B)$.
\end{proof}

Summarizing  the above results we have the following theorem:

\begin{theorem}\label{UrysohnMainTheorem}
Let $\U$ be the Urysohn space. Then for a generic $F\in\mathcal{F}(\U)$, the generic isometry $f\in \Iso(F)$ cannot be extended to an isometry $\tilde{f}\in \Iso(\U)$. Moreover, for a generic pair $(A,B)\in\mathcal{F}(\U)^2$, the generic isometry $f\in \Iso(A,B)$ cannot be extended to an isometry $\tilde{f}\in \Iso(\U)$. 
\end{theorem}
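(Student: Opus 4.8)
The plan is to deduce Theorem \ref{UrysohnMainTheorem} by assembling the three ingredients that precede it, exactly mirroring the structure of the countable case in Theorem \ref{theorem}(1). The statement has two halves: the generic single subspace $F$ and the generic pair $(A,B)$. For the pair, Lemma \ref{TheoremUrysohnAbsorbsPoints} already tells us that for a generic $F \in \mathcal{F}(\mathbb{U})$ the set $F$ absorbs points; since a finite product of comeager sets is comeager, for a generic pair $(A,B) \in \mathcal{F}(\mathbb{U})^2$ both $A$ and $B$ absorb points. I must also arrange that $A,B \neq \mathbb{U}$, but this is cheap: the set $\{F : F = \mathbb{U}\}$ is meager in $\mathcal{F}(\mathbb{U})$ (indeed, the proof of Lemma \ref{TheoremUrysohnAbsorbsPoints} approximates an arbitrary $F$, including proper closed subsets, by elements of the comeager set $\mathcal{A}$, so points with $F \neq \mathbb{U}$ are dense and the condition $F \neq \mathbb{U}$ holds on a dense $G_\delta$). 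Hence generically both $A$ and $B$ absorb points and are proper. Lemma \ref{LemmaUrysohnNotExtend} then applies verbatim to give that $\mathcal{E}(A,B)$ is meager in $\mathrm{Iso}(A,B)$, which is precisely the assertion that the generic isometry $f \in \mathrm{Iso}(A,B)$ does not extend to an isometry of $\mathbb{U}$.

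For the single-subspace half I would reduce to the pair case with $A = B = F$. By Lemma \ref{TheoremUrysohnAbsorbsPoints} the generic $F$ absorbs points, and by the meagerness remark above I may also assume $F \neq \mathbb{U}$. I then want to invoke Lemma \ref{LemmaUrysohnNotExtend} with $A = B = F$ to conclude that $\mathcal{E}(F) = \mathcal{E}(F,F)$ is meager in $\mathrm{Iso}(F) = \mathrm{Iso}(F,F)$. The one point requiring a glance is that the hypotheses of Lemma \ref{LemmaUrysohnNotExtend} are symmetric in $A$ and $B$ and make no demand that $A$ and $B$ be distinct, so setting them equal is legitimate; the construction in that lemma — choosing $c_* \in F^c$, covering the set of possible images $C \subseteq F^c$, and playing the Banach--Mazur game $G^{**}(\mathcal{N}, \mathrm{Iso}(F))$ with $\mathcal{N} = \mathrm{Iso}(F) \setminus \mathcal{E}(F)$ — goes through unchanged. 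Meagerness of $\mathcal{E}(F)$ is exactly the statement that the generic $f \in \mathrm{Iso}(F)$ fails to extend.

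The step I expect to be the genuine content, rather than bookkeeping, is already discharged inside Lemma \ref{LemmaUrysohnNotExtend}: the winning strategy for Player II that uses the uniform splitting furnished by Lemma \ref{lemma g1 g2}. The essential mechanism is that, given a finite partial isometry $h_n$ defined on a set containing the designated point $c_*$, Player II produces two Kat\v{e}tov maps $g_1, g_2$ that agree off $c_*$ but force $c_*$'s distance to differ by more than $\varepsilon$; because $A$ and $B$ both absorb points he can realize these two incompatible constraints by genuine points $z_n \in A$ and $z'_n \in B$, and play an open set $V_n$ that permanently obstructs $c_*$ from being sent into any single $W_{i_n}$. The uniform gap $\varepsilon$, together with the covering of $C$ by sets of diameter $<\varepsilon/2$, is what converts ``not extendable'' into a topologically robust (comeager) phenomenon rather than a single counterexample — this is the Urysohn analogue of $\mathbf{C}$ splitting $\mathcal{K}$ in Lemma \ref{not extend}. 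So my contribution at the level of Theorem \ref{UrysohnMainTheorem} itself is purely the assembly: verify the ``$F \neq \mathbb{U}$ generically'' point, take products for the pair, specialize to $A = B = F$ for the single space, and cite Lemmas \ref{TheoremUrysohnAbsorbsPoints} and \ref{LemmaUrysohnNotExtend}.
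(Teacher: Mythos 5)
Your proposal is correct and follows essentially the same route as the paper: cite Lemma \ref{TheoremUrysohnAbsorbsPoints} (plus products of comeager sets for the pair case), note that generically $F\neq\U$, and apply Lemma \ref{LemmaUrysohnNotExtend}, with the single-space case obtained by taking $A=B=F$. One tiny remark: your parenthetical justification that $\{F:F=\U\}$ is meager is a bit garbled (approximating proper subsets by elements of $\mathcal{A}$ is not what is needed), but the fact itself is immediate --- the singleton $\{\U\}$ is closed and non-isolated in the Wijsman topology, since any basic neighborhood of $\U$ determined by points $x_1,\ldots,x_n$ contains the proper closed set $\{x_1,\ldots,x_n\}$ --- and the paper likewise asserts it without proof.
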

\begin{proof}
By Theorem \ref{TheoremUrysohnAbsorbsPoints}, we have that for a generic subspace $F\in\mathcal{F}(\U)$ and for a generic pair $A,B\in\mathcal{F}(\U)$ all $F,A,B$ absorb points. Moreover, for generic $F,A,B$,  $F,A,B\neq\U$. Lemma \ref{LemmaUrysohnNotExtend} proves the rest.
\end{proof}

We want to point out here that the situation with the Urysohn sphere $\mathbb{S}$ is not much different. Analogous statements to Lemmas  \ref{TheoremUrysohnAbsorbsPoints}, \ref{lemma g1 g2}, \ref{LemmaUrysohnNotExtend} and Theorem \ref{UrysohnMainTheorem} follow easily for $\mathbb{S}$ if we make the obvious changes. There is also a final remark that should be made. The theory of \Fraisse{} limits grafted with ideas from continuous logic can be naturally generalized to the context of complete separable metric structures; see for example \cite{Itai}. In this context, the Urysohn space and the Urysohn sphere are just examples of the general theory. A natural question arises. Namely, whether a dichotomy similar to the one in Chapter \ref{ChapterSAP} could possibly hold for ``metric \Fraisse{} structures''. The following example suggests which types of metric structures would belong to the ``global" side of the dichotomy. However, the methods developed here face some obstacles when we try to apply them into this context. For example, the general theory of ``metric \Fraisse{} structures'' is developed for \emph{approximately ultrahomogeneous} structures rather than ultrahomogeneous structures and moreover, a natural notion of SAP does not seem to exist.
\begin{example}\label{Baire}
Let $\mathcal{N}=\N^\N$ be the Baire space endowed with the ultrametric $d$ with 
\[d(\alpha,\beta)=\frac{1}{m}\quad\text{where}\quad m=\min\{n: \alpha(n)\neq\beta(n)\},\]
if $\alpha\neq\beta$ and $d(\alpha,\beta)=0$ otherwise. The metric structure $(\mathcal{N},d)$ is a metric \Fraisse{} structure that happens to be ultrahomogeneous. For a generic subspace $F\in\mathcal{F}(\mathcal{N})$, $F$ is the body of a pruned tree $T$ on $\N$ such that for every  $n\in\N$ there are infinitely many $s\in T$ and infinitely many $s\not\in T$ of length $n$. It is easy now to see that for a generic $F\in\mathcal{F}(\mathcal{N})$, $F$ is a global substructure and that the generic pair $A,B\in\mathcal{F}(\mathcal{N})$ is also global.
\end{example}


\begin{bibdiv}
\begin{biblist}

\bib{Beer}{article}
{
author = {Beer, G.}
title = {A Polish topology for the closed subsets of a Polish space}
date = {1991}
journal = {Proc. Amer. Math. Soc.}
volume = {113:4}
pages = {1123-1133}
}

 \bib{Itai}{article}
 {
author = {Ben Yaacov, I.}
title = {\Fraisse{} limits of metric structures}
year = {2012}
eprint = {arXiv/1203.4459}
}

\bib{Tower}{article}
{
author = {Bilge, D.}
author = {Melleray, J.}
title = {Elements of finite order in automorphism groups of homogeneous structures}
journal = {Contributions to Discrete Mathematics},
volume = {8}
number = {2}
year = {2013}
}

\bib{oligomorphic}{book}
{
author = {Cameron, P. J.}
title = {Oligomorphic permutation groups}
date = {1990}
publisher = {Cambridge University Press, Cambridge}
series = {London Mathematical Society Lecture Note Series}
volume = {152}
}

\bib{Fraisse}{article}
{
author = {\Fraisse{}, R.}
title = {Sur l'extension aux relations de quelques propriet\'es des ordres}
date = {1954}
journal = {Ann. Sci. \'Ecole Norm. Sup.}
volume = {71}
pages = {363-388}
}
 
\bib{Hodges}{book}
{
author = {Hodges, W.}
title = {Model Theory}
date = {1993}
publisher = {Cambridge University Press, Cambridge}
}

\bib{Huhu}{article}
{
author = {Huhunai\v{s}vili, G. E.}
title = {On a property of Urysohn's universal metric space}
date = {1955}
journal = {Dokl. Akad. Nauk USSR (N.S.)}
volume = {101}
pages = {332-333}
language = {in Russian}
}

\bib{Katetov}{article}
{
author = {Kat\v{e}tov, M.}
title = {On universal metric spaces}
date = {1988}
journal = {General topology and its relations to modern analysis
and algebra, VI (Prague, 1986), Res. Exp. Math., vol.16, Heldermann, Berlin}
pages = {323-330}
}

\bib{bible}{book}
{
author = {Kechris, A. S.}
title = {Classical Descriptive Set Theory}
date = {1995}
publisher = {Springer}
series = {Graduate Texts in Mathematics}
volume = {156}
}

\bib{MellerayInitial}{article}
{
author = {Melleray, J.}
title = {On the geometry of Urysohn's universal metric space}
journal = {Topology and its Applications}
volume = {154}
pages = {1531-1560}
pages = {384 - 403}
year = {2007}
}

\bib{Melleray}{article}
{
author = {Melleray, J.}
title = {Some geometric and dynamical properties of the Urysohn space}
date = {2008}
journal = {Topology and its Applications}
volume = {155}
pages = {1531-1560}
}

\bib{Urysohn}{article}
{
author = {Urysohn, P. S.}
title = {Sur un espace métrique universel}
date = {1925}
journal = {C. R. Acad. Sci. Paris}
volume = {180}
pages = {803–806}
}

\bib{Uspenskij}{article}
{
author = {Uspenskij, V. V.}
title = {On the group of isometries of the Urysohn universal metric space}
date = {1990}
journal = {Comment. Math. Univ. Carolin.}
volume = {31:1}
pages = {181-182}
}

\bib{Wijsman}{article}
{
author = {Wijsman, R.}
title = {Convergence of sequences of convex sets, cones and functions, II}
date = {1966}
journal = {Trans. Amer. Math. Soc.}
volume = {123}
pages = {32-45}
}

\end{biblist}
\end{bibdiv}

\end{document}